\newcommand{\ignore}[1]{}
\numberwithin{figure}{section}
\numberwithin{table}{section}
\newcommand{\bpi}{\boldsymbol\pi}
\newcommand{\blambda}{\boldsymbol\lambda}
\DeclareSymbolFont{bbold}{U}{bbold}{m}{n}
\DeclareSymbolFontAlphabet{\mathbbold}{bbold}
\newcommand{\ind}[1]{\mathbbold{1}_{\left[ #1 \right]}}
\newcommand{\Reals}{\mathbb{R}} 
\newcommand{\Pbb}{\mathbb{P}}
\newcommand{\Ebb}{\mathbb{E}}
\newcommand{\Vbb}{\mathbb{V}}
\newcommand{\defined}{:=} 
\newcommand{\zerob}{\mathbf{0}} 
\newcommand{\oneb}{\mathbf{1}} 
\newcommand{\ab}{\bm{a}}
\newcommand{\cb}{\bm{c}}
\newcommand{\gb}{\bm{g}}
\newcommand{\sba}{\bm{s}}
\newcommand{\vb}{\bm{v}}
\newcommand{\xb}{\bm{x}}
\newcommand{\yb}{\bm{y}}
\newcommand{\cB}{\mathcal{B}}
\newtheorem{proposition}{Proposition}
\crefname{prop}{Proposition}{Propositions}
\newtheorem{assumption}{Assumption}
\crefname{ass}{Assumption}{Assumptions}
\newtheorem{theorem}{Theorem}
\crefname{thm}{Theorem}{Theorems}
\newtheorem{definition}{Definition}[theorem]
\crefname{alg}{Algorithm}{Algorithms}
\crefname{equation}{}{}
\begin{document}
\title{Stochastic Average Model Methods}
\author{Matt Menickelly, Stefan M. Wild}
\maketitle




\abstract{
 We consider the solution of finite-sum minimization problems, such as those appearing in nonlinear least-squares or general empirical risk minimization problems. 
 We are motivated by problems in which the summand functions are computationally expensive and evaluating all summands on every iteration of an optimization method may be undesirable. We present the idea of stochastic average model (\texttt{SAM}) methods, inspired by stochastic average gradient 
 methods. \texttt{SAM} methods sample component functions on each iteration of a trust-region method according to a discrete probability distribution on component functions; the distribution is designed to minimize an upper bound on the variance of the resulting stochastic model. We present promising numerical results concerning an implemented variant extending the derivative-free model-based trust-region solver \texttt{POUNDERS}, which we name \texttt{SAM-POUNDERS}.
}




\section{Introduction}
We consider the minimization of an objective comprising a sum of component functions,
\begin{equation}
\label{eq:sum}
f(\xb) = \sum_{i=1}^p F_i(\xb),
\end{equation}
for parameters $\xb\in\Reals^n$.

The minimization problem \cref{eq:sum} is ubiquitous in computational optimization, with applications across computational science, engineering, and industry.
Statistical estimation problems, such as those appearing in empirical risk minimization, 
can be described in the form of \cref{eq:sum}.
In such a setting, one lets $\xb$ denote statistical model parameters and lets each $F_i$ denote a likelihood function associated with one of $p$ empirical data points. 
We refer to the general problem in \cref{eq:sum} as finite-sum minimization.  The literature for solving this problem when $p$ is large is now massive, mainly due to the use of empirical risk minimization in supervised learning. 
Most such methods are based fundamentally on the stochastic gradient iteration
\cite{RobbinsMonro1951}, which works (in the finite-sum minimization setting) by iteratively approximating a gradient $\nabla f(\xb^k)$ by $\nabla F_i(\xb^k)$ for a randomly chosen $i$ and updating $\xb^{k+1}\gets \xb^k - \gamma_k \nabla F_i(\xb^k)$, for some $\gamma_k > 0$.   
When gradients are unavailable or prohibitively expensive to compute---casting the problem of \cref{eq:sum} as one of derivative-free optimization---gradient-free versions of the stochastic iteration are also plentiful.
Such methods typically replace the stochastic gradient approximation by a finite-difference estimation of the stochastic gradient; this idea dates back to at least 1952 \cite{KieferWolfowitz}, one year after the stochastic gradient iteration was proposed in \cite{RobbinsMonro1951}. 

The setting in which methods based on the stochastic gradient iteration are appropriate are typically marked by several characteristics: 
\begin{enumerate} 
\item \emph{Accuracy} (as measured in terms of an optimality gap) is \emph{not critically important}, and only coarse estimates of the solution to \cref{eq:sum} are required.
\item The number of component functions, $p$, in \cref{eq:sum} is fairly \emph{large}, so that computing (or estimating) $\nabla F_i(\xb)$ is significantly less expensive than computing $\nabla f(\xb)$. 
\item The computation of $\nabla F_i(\xb)$ (or $F_i(\xb)$) is fairly \emph{inexpensive}, typically requiring a number of arithmetic operations linear in $n$.  
\end{enumerate} 

In this paper we are concerned with settings where these assumptions are not satisfied. 
In particular, and in contrast to each of the three points above, we make the following assumptions. 
\begin{enumerate}
\item Problems \emph{must be solved to a particular accuracy} to provide reliable results and model calibrations. 
\item Data is scarce and expensive to obtain, meaning $p$ is \emph{not necessarily large}, and thus---to avoid overparameterization---$n$ is likely not large, either. 
\item  \emph{Computation of $F_i(\xb)$ will be the dominant cost} of any optimization method.
\end{enumerate}

As a  motivating example for these assumptions, 
we refer to the problem of nuclear model calibration in computational science. 
In such problems, each $F_i$ in \cref{eq:sum} is a likelihood term that fits an observable derived from a model of a nucleus parameterized by $\xb$ to empirical data.
The computation of the observable, however, involves a time-intensive computer code. 
The application of a derivative-free trust-region method, \texttt{POUNDERS}, to such problems when the likelihood function is expressed as a least-squares minimization problem, is discussed in \cite{SWCHAP14}. 
More recently, Bollapragada  et al.~\cite{Bollapragada_2020} compared the performance of various derivative-free methods on problems of nuclear model calibration and arrived at some conclusions that partially inspired the present paper. 
Although we are particularly interested in and motivated by derivative-free optimization in this paper, we remark that many of the concerns outlined above also apply to expensive \emph{derivative-based} model calibration; see, for example, \cite{Bouhlel2019}. 

We comment briefly on these differences in problem settings.
For the  issue of accuracy, it is well known that the standard stochastic gradient iteration with a constant step size $\gamma_k$ can  converge (in expectation) only to a particular level of accuracy determined by the variance of the stochastic gradient estimator. 
More formally, by arguments that are now essentially folklore (see, e.g., \cite{Ghadimi2013} or \cite[Section 4.3]{BottCurtNoce16}), given a Lipschitz constant for $\nabla f(\xb)$, $L$, a uniform second moment bound over all $k$, $\mathbb{E}_i\left[\left\|\nabla F_i(\xb^k)\right\|^2\right]\leq M$, and a lower bound on the objective function value, $f_*$, one can demonstrate that for a whole number of iterations $K>0$, the stochastic gradient iteration with step size chosen sufficiently small achieves 
$$\displaystyle\frac{1}{K}\displaystyle\sum_{k=1}^K \mathbb{E}\left[\left\|\nabla f(\xb^k)\right\|^2\right] \leq \displaystyle\frac{L \left( f(\xb^0) - f_*\right)}{K} + \frac{M}{2},$$
where the expectation is taken with respect to the $\sigma$-algebra generated by the random draws of $i$.\footnote{This is obviously a result appropriate for a general nonconvex setting. Stronger results can be proven when additional regularity assumptions, typically strong convexity,  are imposed on $f$. However, because we are motivated by problems where convexity typically should not be assumed, we choose to state this folklore result. We also note that, even in convex settings, stochastic gradient descent with a fixed step size will still involve an irreducible error term dependent on stochastic gradient variance.}
Notably, regardless of how many iterations $K$ of the stochastic gradient method are performed, there is an unavoidable upper bound of $\frac{M}{2}$ on the optimality gap. 

Such a variance-dependent gap can be eliminated by using a variety of  \emph{variance reduction} techniques. 
The simplest such technique entails using a sequence of step sizes that decay to 0 at a sublinear rate. 
Choosing such a decaying step size schedule in practice, however, is known to be difficult. 
More empirically satisfying methods of variance reduction include methods such as stochastic average gradient (\texttt{SAG}) methods \cite{Schmidt2013, roux2012stochastic}, which maintain a running memory of the most recently evaluated $\nabla F_i(\xb^k)$ and reuse those stale gradients when forming an estimator of the full gradient $\nabla f(\xb^k)$. 
Although convergence results can be proven about \texttt{SAG} \cite{Schmidt2013}, such a gradient approximation scheme naturally leads to a biased gradient estimate. 
The algorithm \texttt{SAGA}\footnote{The additional ``A" in \texttt{SAGA} ostensibly stands for ``am{\'e}lior{\'e}", or ``ameliorated"} employs so-called control variates to correct this bias \cite{DefazioBL14}. 
We note that the method presented in this paper is inspired by \texttt{SAG} and \texttt{SAGA}, and even more closely resembles \cite{gower2018tracking}.
Additional variance reduction techniques include ``semi-stochastic" gradient methods, which occasionally, according to an outer loop schedule, compute a full gradient $\nabla f(\xb^k)$; most such methods are inspired by \texttt{SVRG} \cite{Johnson2013, zhang2013linear}.

For the second and third issues, the prevalence and dominance of stochastic gradient methods is empirically undeniable in the setting of supervised learning with big data, where $p$ is large and the component (loss) functions $F_i$ are typically simple functions (e.g., a logistic loss function) of the data points.
In an acclaimed paper \cite{bottou2007tradeoffs}, the preference for stochastic gradient descent over gradient descent in the typical big data setting is more rigorously demonstrated,\footnote{Once again, the results in \cite{bottou2007tradeoffs} are proven under convexity assumptions, but one can see how their conclusions concerning time-to-solution are also valid for nonconvex but inexpensive and large-scale learning.}
illustrating the trade-off in worst-case time-to-solution in terms as a function of desired optimality gap and $p$, among other important quantities.

As mentioned, however, in our setting $p$ is relatively small, and the loss functions $F_i$ are far from computationally simple. 
In fact, the gradients $\nabla F_i$ are often unavailable, necessitating derivative-free methods---or if the gradients $\nabla F_i$ are available, their computation requires algorithmic differentiation (AD) techniques, which are computationally even more expensive than function evaluations of $F_i(\xb)$ and require the human effort of AD experts for many applications. 
In the derivative-free setting, prior work has paid particular attention to the case where each $F_i(\xb)$ is a composition of a square function with a more complicated function, that is, the setting of least-squares minimization \cite{Zhangdfo10, Zhang2012, SWCHAP14, Cartis2017}. 
However, these works do not employ any form of randomization.
Although the general technique we propose in this paper is applicable to a much broader class of finite-sum minimization \cref{eq:sum}, we will demonstrate the use of our technique by extending \texttt{POUNDERS} \cite{SWCHAP14}, which was developed for derivative-free least-squares minimization. 

\section{Stochastic Average Model Methods}
We impose the following assumption on $f$ throughout this manuscript.
\begin{assumption}
\label{ass:f}
Each $F_i$ has a Lipschitz continuous gradient $\nabla F_i$ with constant $L_i$
(and hence $f$ has a Lipschitz continuous gradient).
Additionally, each $F_i$ (and hence $f$) is bounded below. 
\end{assumption} 

For each $F_i$ we employ a component model $m_i:\Reals^n \rightarrow \Reals$.
Each component model $m_i$ is associated with a dynamically updated 
\emph{center point} $\cb^k_i \in \Reals^n$, and we express the model value at a point $\xb \in \Reals^n$ 
as
$m_i(\xb; \cb^k_i)$. 
We use this notation in order to never lose sight of a component model's center point.

We refer to our model of $f$ in \cref{eq:sum} as the 
\emph{average model}, 
\begin{equation*}
\bar{m}^k(\xb) \defined \displaystyle\sum_{i=1}^p m_i(\xb; \cb^k_i),
\end{equation*}
and distinguish it from the model
\begin{equation*}
m^k(\xb) \defined \displaystyle\sum_{i=1}^p m_i(\xb; \xb^k),
\end{equation*}
for which all $p$ component models employ the common center $\cb^k_i = \xb^k$.

The name ``average model" reflects its analogue,
the average gradient, employed in \texttt{SAG} methods~\cite{roux2012stochastic, Schmidt2013}. 
Given a fixed batch size $b$, on iteration $k$ our method selects a subset $I^k\subseteq\{1,\dots,p\}$ of size $\left|I^k\right|=b$ and updates $\cb^k_i$ to the current point $\xb^k$ for all $i\in I^k$. 
The update of $\cb^k_i$ in turn results in an update to the component models $\{m_i: \, i\in I^k\}$. 
In this paper we select $I^k$ in a randomized fashion; we denote the probability of the event that $i\in I^k$ by
$$\pi^k_i \defined \Pbb\left[i\in I^k\right].$$
We remark that such randomized selections of batches in an optimization have been studied in the past and have been referred to as arbitrary sampling \cite{CsibaRichtarik2018,HanzelyRichtarik2019,HorvathRichtarik2019,RichtarikTakac2016};
that body of work motivated the ideas presented here.
After updating the component models $\{m_i(\xb;\cb^k_i): i\in I^k\}$ to $\{m_i(\xb;\xb^k): i\in I^k\}$, 
we employ the \emph{ameliorated model}
\begin{equation}\label{eq:saga_model}
\hat{m}_{I^k}(\xb) \defined  \displaystyle\sum_{i \in I^k} \frac{m_i(\xb;\xb^k)-m_i(\xb;\cb_i^{k-1})}{\pi^k_i} + \bar{m}^{k-1}(\xb),
\end{equation}
recalling that $\bar{m}_{k-1}(\xb)$ is the previous iteration's average model.
In order for \cref{eq:saga_model} to be well defined, we require that $\pi^k_i>0$; that is, we require the probability of sampling the $i$th component function in the $k$th iteration to always be nonzero. 
We remark that the ameliorated model \cref{eq:saga_model} is obviously related to the \texttt{SAGA} model \cite{DefazioBL14}, which is an unbiased correction to the \texttt{SAG} model.
We record precisely what is meant by unbiased correction in the following proposition, and we
stress that the statement of the proposition is effectively independent of the particular randomized selection of $I^k$. 
\begin{proposition}\label{prop:unbiased}
For all samplings defined by $\pi_i^k>0$, $i=1, \ldots,p$, and for all $\xb \in \Reals^n$, the ameliorated model in \cref{eq:saga_model} satisfies $\displaystyle\Ebb_{I^k}\left[ \hat{m}_{I^k}(\xb)\right] = m^k(\xb).$
\end{proposition}

\begin{proof}
$$\begin{array}{rl}
\displaystyle\Ebb_{I^k}\left[ \hat{m}_{I^k}(\xb)\right] = & 
\Ebb_{I^k}\left[\displaystyle\sum_{i=1}^p \frac{m_i(\xb;\xb^k)-m_i(\xb;\cb_i^{k-1})}{\pi^k_i}  \ind{i\in I^k} \right] + \bar{m}_{k-1}(\xb)\\
= & \displaystyle\sum_{i=1}^p \left(m_i(\xb;\xb^k)-m_i(\xb;\cb_i^{k-1})\right) + \bar{m}_{k-1}(\xb)\\
= & \displaystyle\sum_{i=1}^p \left(m_i(\xb;\xb^k)-m_i(\xb;\cb_i^{k-1}) + m_i(\xb;\cb_i^{k-1})\right)\\
= & \displaystyle\sum_{i=1}^p m_i(\xb;\xb^k) = m^k(\xb). 
\end{array}
$$
\end{proof}

To make the currently abstract notions of component model centers and model updates more immediately concrete, 
we initially focus on one particular class of component models; 
in \Cref{eq:beyondfo}, we introduce and discuss three additional classes of component models.
Our first class of component models is first-order (i.e., gradient-based) models of the form
\begin{equation}\label{eq:first_order}
m_i(\xb;\cb^k_i) = F_i(\cb^k_i) + \nabla F_i(\cb^k_i)^\top (\xb-\cb_i^k). \tag{FO}
\end{equation}
Thus, on any iteration $k$ the component models $\{m_i: \, i\in I^k\}$ are updated to 
\begin{equation*}
m_i(\xb;\xb^k) = F_i(\xb^k) + \nabla F_i(\xb^k)^\top (\xb-\xb^k),
\end{equation*}
which entails an additional pair of function and gradient evaluations $(F_i(\xb^k), \, \nabla F_i(\xb^k))$ for each $i\in I^k$. 
The result of \Cref{prop:unbiased} then guarantees that when using the component models \cref{eq:first_order},
\begin{equation*}
\mathbb{E}_{I^k}\left[\hat{m}_{I^k}(\xb)\right] = \displaystyle\sum_{i=1}^p \left(F_i(\xb^k) + \nabla F_i(\xb^k)^\top  (\xb-\xb^k)\right) = f(\xb^k) + \nabla f(\xb^k)^\top (\xb-\xb^k).
\end{equation*}
That is, in expectation over the draw of $I^k$, the ameliorated model recovers the first-order model of \cref{eq:sum} centered at $\xb^k$. 

We will suggest a specific set of probabilities $\{\pi^k_i\}$ later in \Cref{sec:estimator_properties}; but for now, given arbitrary parameters $0 < \pi^k_i \leq 1$,
we can completely describe our average model method in \Cref{alg:dfotr}. 

\begin{algorithm}[h!]
\caption{Average Model Trust-Region Method}
\label{alg:dfotr}
\textbf{(Initialization)} Choose initial point $\xb^0\in\Reals^n$ and initial trust-region radius $\Delta_0\in (0,\Delta_{\max})$ 
with $\Delta_{\max}>0$. 
Choose algorithmic constants $\gamma > 1, \eta_1\in(0,1), \eta_2 > 0$.  \\
Set $\cb^0_i \gets \xb^0$ for $i=1,\dots,p$. \\
Construct each initial model $m_i(\cdot;\cb^0_i)$.\\
$k\gets 1$.\\
\For{$k=1,2,\dots$}
{
\textbf{(Choose a subset)} (Randomly) generate subset $I^k$ and corresponding probability parameters $\{\pi^{k,I^k}_i\}$. \label{line:subset_selection}\\
\textbf{(Update models)} For each $i\in I^k$, $\cb^k_i\gets \xb^k$. For each $i\not\in I^k$, $\cb^k_i\gets \cb^{k-1}_i$.\\
\textbf{(Compute step)} Approximately solve the trust-region subproblem with the average model, $\sba^k\gets \arg\displaystyle\min_{\sba:\|\sba\|\leq\Delta_k} \hat{m}_{I^k}(\xb^k+\sba)$.\\
\textbf{(Choose a second subset)} (Randomly) generate subset $J^k$ and corresponding probability parameters $\{\pi^{k,J^k}_i\}$  \label{line:second_subset_selection}. \\
\textbf{(Compute estimates)} Compute estimates $\hat{m}_{J^k}(\xb^k), \hat{m}_{J^k}(\xb^k+\sba^k)$ of $f(\xb^k)$, $f(\xb^k+\sba^k)$, respectively. This entails computing any previously uncomputed $f_j(\xb^k)$ and $f_j(\xb^k+\sba)$ for $j\in J^k$. \label{line:compute_estimates} \\
\textbf{(Determine acceptance)} Compute $\rho_k\gets \displaystyle\frac{\hat{m}_{J^k}(\xb^k)-\hat{m}_{J^k}(\xb^k+\sba^k)}{\hat{m}_{I^k}(\xb^k)-\hat{m}_{I^k}(\xb^k+\sba^k)}$. \\
\textbf{(Accept point)} \If{$\rho_k\geq\eta_1$ and $\Delta_k \leq \eta_2\|\nabla \hat{m}_{I^k}(\xb^k)\|$}{
$\xb^{k+1}\gets \xb^k + \sba^k$.}
\Else
{$\xb^{k+1}\gets\xb^k$.}
\textbf{(Trust-region adjustment)} \If{$\rho_k\geq\eta_1$ and $\Delta_k \leq \eta_2\|\nabla \hat{m}_{I^k}(\xb^k)\|$ \label{line:tr_adjust}}{
$\Delta_{k+1}\gets \min\{\gamma\Delta_k,\Delta_{\max}\}.$}
\Else
{
$\Delta_{k+1}\gets\gamma^{-1}\Delta_k$.
}
}
\end{algorithm} 

\Cref{alg:dfotr} resembles a standard derivative-free model-based trust-region method. 
More specifically, \Cref{alg:dfotr} is a variant of the \texttt{STORM} method introduced in \cite{Chen2017,BCMS2018}, in that random models and random estimates of the objective (dictated by the random variables $I^k$ and $J^k$, respectively) are employed. We will examine this connection more closely in \Cref{sec:storm}.
At the start of each iteration, $I^k$ is randomly generated according to the discrete probability distributions $\{\pi_i^{k,I^k}\}$ with support $\{1,\dots,p\}$.
Next  we  compute a random model---specifically, an ameliorated model of the form \cref{eq:saga_model}---defined via the random variable $I^k$. 
We then (approximately\footnote{By approximately, we mean the solution to the trust-region subproblem should satisfy a fraction of Cauchy decrease; see, e.g., \cite{Trmbook}.}) solve a trust-region subproblem, minimizing the ameliorated model $\hat{m}_{I^k}$ over a trust region of radius $\Delta_k$ to obtain a trial step $\sba^k$. 
We then compute random estimates of the objective function value at the incumbent and trial points ($f(\xb^k)$ and $f(\xb^k+\sba^k)$, respectively) by constructing a second ameliorated model $\hat{m}_{J^k}$ and then evaluating $\hat{m}_{J^k}(0)$ and $\hat{m}_{J^k}(\sba^k)$. 
If the decrease as measured by $\hat{m}_{J^k}$ is sufficiently large compared with the decrease predicted from the solution of the trust-region subproblem, then, as in a standard trust-region method, the trial step is set as the incumbent step of the next iteration, and the trust-region radius is increased.
Otherwise, the incumbent step is unchanged, and the trust-region radius is decreased.

\section{Ameliorated Models $\hat{m}_{I^k}$ and $\hat{m}_{J^k}$}\label{sec:estimator_properties}
Throughout this section we will continue to assume that models are of the form \cref{eq:first_order}, for the sake of introducing ideas clearly. 

\subsection{Variance of $\hat{m}_{I^k}$} 
Having demonstrated that $\hat{m}_{I^k}(\xb)$ is a pointwise unbiased estimator of an unknown (but---at least in the case of \cref{eq:first_order}---meaningful) model in \Cref{prop:unbiased}, it is reasonable to question what the variance of this estimator is. 
Denote the probability that both indices $i,j\in I^k$ by
$$\pi_{ij}^k = \Pbb\left[i,j\in I^k\right].$$

\begin{proposition}\label{prop:var}
The variance of $\hat{m}_{I^k}(\xb)$, for any $\xb\in\Reals^n$, is
\begin{equation}\label{eq:var}\displaystyle\Vbb_{I^k}\left[\hat{m}_{I^k}(\xb)\right] = \sum_{(i,j)\in [\![ p ]\!] \times [\![ p ]\!]}\left(\frac{\pi^k_{ij}}{\pi^k_i \pi^k_j} - 1\right)d^k_id^k_j,
 \end{equation}
where we denote
$d^k_i = m_i(\xb;\xb^k) - m_i(\xb;\cb^{k-1}_i)$
and abbreviate $\{1,\dots,p\}$ as $[\![ p ]\!]$.
\end{proposition}

\begin{proof}
Let $m^k(\xb)$ denote the expectation over $I^k$ of $\hat{m}_{I^k}(\xb)$. 
Then,
$$\begin{array}{rl}
\displaystyle\Vbb_{I^k}\left[\hat{m}_{I^k}(\xb)\right]
= & \Ebb_{I^k}\left[\left(\hat{m}_{I^k}(\xb) - m^k(\xb)\right)^2\right]\\
= & \Ebb_{I^k}\left[\left(\displaystyle\sum_{i=1}^p\displaystyle\frac{\ind{i\in I_k} d^k_i}{\pi^k_i} + \bar{m}_{k-1}(\xb)\right)^2\right] - m^k(\xb)^2\\
= & 
\mathbb{E}_{I^k}\left[ \displaystyle\sum_{(i,j)\in [\![ p ]\!] \times [\![ p ]\!]} \displaystyle\frac{d^k_i d^k_j}{\pi^k_i \pi_j^k} \ind{(i,j)\in I^k} + \bar{m}_{k-1}(\xb)^2 +2\bar{m}_{k-1}(\xb)\displaystyle\sum_{i =1}^p \frac{d^k_i}{\pi^k_i}\ind{i\in I^k} \right] \\
& - m^k(\xb)^2\\
= & \mathbb{E}_{I^k}\left[ \displaystyle\sum_{(i,j)\in[\![ p ]\!] \times [\![ p ]\!]} \displaystyle\frac{d^k_i d^k_j}{\pi^k_i \pi_j^k} \ind{(i,j)\in I^k}\right] + 2\bar{m}_{k-1}(\xb)\mathbb{E}_{I^k}\left[\displaystyle\sum_{i =1}^p \frac{d^k_i}{\pi^k_i}\ind{i\in I^k} \right] \\
& + \bar{m}_{k-1}(\xb)^2 - m^k(\xb)^2\\
= & \mathbb{E}_{I^k}\left[ \displaystyle\sum_{(i,j)\in [\![ p ]\!] \times [\![ p ]\!]} \displaystyle\frac{d^k_i d^k_j}{\pi^k_i \pi_j^k} \ind{(i,j)\in I^k}\right] + 2\bar{m}_{k-1}(\xb)\left[m^k(\xb) - \bar{m}_{k-1}(\xb)\right] \\
& + \bar{m}_{k-1}(\xb)^2 - m^k(\xb)^2\\
= &\mathbb{E}_{I^k}\left[ \displaystyle\sum_{(i,j)\in [\![ p ]\!] \times [\![ p ]\!]} \displaystyle\frac{d^k_i d^k_j}{\pi^k_i \pi_j^k} \ind{(i,j)\in I^k}\right] - (\bar{m}_{k-1}(\xb)-m^k(\xb))^2
 \\
 = & \displaystyle\sum_{(i,j)\in [\![ p ]\!]
 \times [\![ p ]\!]} \displaystyle\frac{\pi_{ij}^k}{\pi^k_i\pi_j^k} d^k_id^k_j - \displaystyle\sum_{(i,j)\in [\![ p ]\!] \times [\![ p ]\!]} d^k_id^k_j\\ 
 = & \displaystyle\sum_{(i,j)\in [\![ p ]\!] \times [\![ p ]\!]} \left(\displaystyle\frac{\pi_{ij}^k}{\pi^k_i\pi_j^k}-1\right) d^k_id^k_j .
\end{array} 
$$
\end{proof}

With this expression for the variance of the ameliorated model at a point $\xb$, 
a reasonable goal is to minimize the variance in \cref{eq:var} as a function of the probabilities $\{\pi^k_i\}_{i\in [p]}$ and $\{\pi_{i,j}^k\}_{i,j\in [p]\times [p]}$.
This choice, of course, leads to two immediately transparent issues:
\begin{enumerate}
 \item The variance is expressed pointwise and depends on differences $d_i^k$ between two model predictions at a single point $\xb$. 
 We should be interested in a more global quantity.
 In particular, for each component function $F_i$, when constructing the ameliorated model $\hat{m}_{I^k}$, we should be concerned with the value of
 \begin{equation}\label{eq:global1}
 d^{k,I^k}_{i} = \displaystyle\max_{\xb\in\cB(\xb^k;\Delta_k)} \left|m_i(\xb;\xb^k) - m_i(\xb;\cb_i^{k-1})\right|,
 \end{equation}
 whereas when constructing $\hat{m}_{J^k}$, we should be particularly concerned with the value of
 \begin{equation}\label{eq:global2}
 d^{k,J^k}_{i} = \max\left\{\left|m_i(0;\xb^k) - m_i(0;\cb_i^{k-1})\right|, \, \left|m_i(\sba^k;\xb^k) - m_i(\sba^k;\cb_i^{k-1})\right|\right\}.
 \end{equation}
 \item We cannot evaluate the difference $d_i^k$ (and hence the quantities in \cref{eq:global1} or \cref{eq:global2}) without first constructing $m_i(\xb;\xb^k)$---which, in the case of \cref{eq:first_order}, requires an evaluation of $F_i(\xb^k)$ and $\nabla F_i(\xb^k)$. 
 These additional evaluations undermine our motivation for sampling component functions in the first place.
\end{enumerate}

In the remainder of this paper we address the first of these two issues by replacing $d^k_i$ in \cref{eq:var} with one of the two quantities in \cref{eq:global1} or \cref{eq:global2} when constructing the respective estimator $\hat{m}_{I^k}(\xb)$ of $\hat{m}_{J^k}(\xb)$. 
For simplicity of notation, we will continue to write $d^k_i$ in our variance expressions, but the interpretation should be whichever of these two local error bounds is appropriate. 

To handle the second issue, we propose computing an upper bound on $d_i^{k,I^k}$ or $d_j^{k,J^k}$. 
In general, we observe that
\begin{align}\label{eq:upper_bounding_dki}
d^k_i =  m_i(\xb;\xb^k)-m_i(\xb;\cb_i^{k-1}) & \leq |m_i(\xb;\xb^k)-m_i(\xb;\cb_i^{k-1})|\\ \nonumber
& \leq |f_i(\xb) - m_i(\xb;\xb^k)| + |f_i(\xb) - m_i(\xb;\cb_i^{k-1})|\\ \nonumber
& =: e(\xb;\xb^k,\cb_i^{k-1}).\\ \nonumber
\end{align}
Continuing with our motivating example of \cref{eq:first_order},
and under \Cref{ass:f}, we may upper bound 
$|F_i(\xb) - m_i(\xb;\cb)| \leq \frac{L_i}{2}\|\xb-\cb\|^2$. 
Thus, in the \cref{eq:first_order} case, the bound in \cref{eq:upper_bounding_dki} may be continued as
\begin{equation*}
d_i^k \leq e(\xb;\xb^k,\cb_i^{k-1}) \leq \frac{L_i}{2}\left(\|\xb-\xb^k\|^2 + \|\xb-\cb_i^{k-1}\|^2 \right).
\end{equation*}
Moreover, we may then upper bound \cref{eq:global1} as
\begin{equation}\label{eq:global1_fo}
 d^{k,I^k}_i = \max_{\xb\in\cB(\xb^k;\Delta_k)} e(\xb;\xb^k;\cb_i^{k-1}) \leq \frac{L_i}{2}\left(\Delta_k^2 + (\|\xb^k-\cb^{k-1}_i\| + \Delta_k)^2\right),
\end{equation} 
and we may upper bound \cref{eq:global2} as
\begin{equation}\label{eq:global2_fo}
 d^{k,J^k}_i \leq \frac{L_i}{2}\max\{\|\xb^k-\cb_i^{k-1}\|^2, \|\sba^k-\xb^k\|^2 + \|\xb^k + \sba^k - \cb^{k-1}_i\|\}.
\end{equation}

Assuming $L_i$ is known, we have now resolved both issues, by having computable upper bounds in \cref{eq:global1_fo} and \cref{eq:global2_fo}.
Now, when we replace $d^k_i$ in \cref{eq:var} with either $d^{k,I^k}_i$ or $d^{k,J^k}_i$ and subsequently minimize the expression with respect to the probabilities $\{\pi^k_i\}, \{\pi_{ij}^k\}$, we are minimizing an upper bound on the variance over a set (the set being $\cB(\xb^k;\Delta_k)$ and $\{\xb^k,\xb^k+\sba^k\}$ when working with $d^{k,I^k}_i$ and $d^{k,J^k}_i$, respectively). 

We note that, especially in settings of derivative-free optimization, assuming access
to $L_i$ is often impractical. 
Although we will motivate
a particular choice of probabilities $\{\pi^k_i\}$ assuming access to $L_i$, we will demonstrate in \Cref{sec:nolip} 
that a simple scheme for dynamically estimating $L_i$ -- and in turn approximating the particular choice of $\{\pi^k_i\}$ -- suffices in practice. 

\subsection{A proposed method for choosing probabilities $\pi_i^k$ given a fixed batch size}\label{sec:optprob}
In Proposition~\ref{prop:unbiased}, we established that any set of nonzero inclusion probabilities $\{\pi_i^k\}_{i=1}^p$ employed in the construction of $\hat{m}_{I_k}$ and $\hat{m}_{J_k}$ will yield an unbiased estimator. 
Some unbiased models, however, will naturally be better than others. 
As is standard in statistics, a model of least (or, at least, low) variance -- an expression for which was computed in Proposition~\ref{prop:var} -- is certainly preferable. 
Because we are considering a setting where there is likely a budget on computational resource use per iteration of an optimization method - as constrained by, for instance, the availability of parallel resources -- we arrive at a high-level goal of seeking a low-variance unbiased estimator of the model subject to a constraint on the number $(|I_k|), (|J_k|)$ of component model updates we are able to perform. 
Towards achievement of this goal, we propose a particular means of determining $\pi_i^k$ in this section, but remind the reader again that any nonzero probabilities will satisfy the minimum requirements for unbiasedness. 

We begin by considering the setting of \emph{independent sampling} of batches, previously discussed in an optimization setting in \cite{CsibaRichtarik2018,HanzelyRichtarik2019,HorvathRichtarik2019}.
This setting is also sometimes referred to as Poisson sampling in the statistics literature. 
In independent sampling, there is an independent Bernoulli trial with success probability $\pi^k_i$ associated with each of the $i$ component functions;
a single realization of the $p$ independent Bernoulli trials determines which of the $p$ component functions are included in $I^k$. 
Thus, under independent sampling, $\pi_{ii}^k = \pi^k_i$ for all $i$, and $\pi_{ij}^k = \pi^k_i\pi_j^k$ for all $(i,j)$ such that $i\neq j$. 
Notably, under the assumption of independent sampling, the variance of $\hat{m}_{I^k}(\xb)$ in \cref{eq:var} established in Proposition~\ref{prop:var} greatly simplifies to
\begin{equation}
\label{eq:is_var}
\displaystyle\Vbb_{I^k}\left[\hat{m}_{I^k}(\xb)\right] = \displaystyle\sum_{i=1}^p \left(\displaystyle\frac{1}{\pi^k_i} -1\right)(d^k_i)^2\quad \forall x \in\Reals^n.
\end{equation}
As a sanity check, notice that for any set of nonzero probabilities $\pi^k_i\in(0,1]$, the variance in \cref{eq:is_var} is nonnegative. 
As a second sanity check, notice that if we deterministically update every model on every iteration $k$ (that is, $I_k = \{1,\dots,p\}$ for all $k$), then $\pi^k_i=1$ for all $k$ and for all $i$, and the 
variance of the estimator $\hat{m}_{I^k}(\xb)$ is 0 for all $k$. 
As an immediate consequence of the independence of the Bernoulli trials,
\begin{equation}\label{eq:expected_batchsize} 
\Ebb_{I^k}\left[\left|I^k\right|\right] = \displaystyle\sum_{i=1}^p \pi^k_i.
\end{equation}

Assuming independent sampling, and in light of \cref{eq:expected_batchsize},  
we specify a \emph{batch size parameter} $b$ and constrain
$\displaystyle\sum_i \pi^k_i=b.$ 
An estimator $\hat{m}_{I^k}(\xb)$ of least variance with \emph{expected} batch size $|I^k|=b$ is one defined by $\{\pi^k_i\}$ solving
\begin{equation}
\label{eq:opt_prob_p}
\begin{array}{rl}
\displaystyle\min_{\{\pi^k_i\}} & \displaystyle\sum_{i=1}^p \left(\displaystyle\frac{1}{\pi^k_i} -1\right)(d^k_i)^2\\
\text{s. to} & \displaystyle\sum_{i=1}^p \pi^k_i = b\\
& 0\leq \pi^k_i \leq 1 \quad \forall i=1,\dots,p.\\
\end{array}
\end{equation}
By deriving the Karush--Kuhn--Tucker (KKT)   conditions, one can see that the solution to \cref{eq:opt_prob_p} is defined, for each $i$, by
\begin{equation}
\label{eq:opt_probs}
\pi_{(i)}^k = \left\{ 
\begin{array}{rl}
(b + c - p) \displaystyle\frac{|d^k_{(i)}|}{\displaystyle\sum_{j=1}^c |d^k_{(j)}|} &\text{ if } i\leq c\\
1 & \text{ if } i > c,\\
\end{array}\right.
,
\end{equation}
where $c$ is the largest integer satisfying 
\begin{equation}
\label{eq:kkt_ineqs}
0<b+c-p\leq \displaystyle\sum_{i=1}^c \displaystyle\frac{|d^k_{(i)}|}{|d^k_{(c)}|},
\end{equation}
 and we have used the order statistics notation
$|d^k_{(1)}| \leq |d^k_{(2)}|\leq \dots \leq |d^k_{(p)}|.$ 
Observe that $c$ is well-defined; 
in the worst case, $c=p-b+1$ satisfies the strict inequality in \cref{eq:kkt_ineqs}, and in turn, $b + (p-b+1) -p = 1$, which is a trivial lower bound on the right-hand side of \cref{eq:kkt_ineqs}, since the $c$th term in the positive sum is 1.

However, independent sampling is undesirable
because it means we can  exert control only over
the \emph{expected} size of $I^k$ over all draws of $I^k$. 
We briefly record a known result concerning a Chernoff bound for sums of independent Bernoulli trials; see, e.g., \cite{vershynin2018high}[Theorem 2.2.2].
 \begin{proposition}\label{prop:chernoff}
For all $\delta >0$, 
$$\mathbb{P}_{I^k} \left[\left|I^k\right| \geq (1+\delta)b \right] \leq \exp\left(-\displaystyle\frac{b\delta^2}{2+\delta}\right).$$
For all $\delta\in(0,1)$,
$$ \mathbb{P}_{I^k} \left[\left|I^k\right| \leq (1-\delta)b \right] \leq \exp\left(-\displaystyle\frac{b\delta^2}{2} \right).$$
\end{proposition}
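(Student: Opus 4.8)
The plan is to prove both inequalities via the standard multiplicative Chernoff bound, applied to the random variable $|I^k| = \sum_{i=1}^p \ind{i \in I^k}$, which under independent sampling is a sum of $p$ independent Bernoulli variables. Recalling \Cref{eq:expected_batchsize} and the normalization $\sum_{i=1}^p \pi_i^k = b$, we have $\mathbb{E}_{I^k}[|I^k|] = b$, so that both statements are deviation bounds about the mean.

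For the upper tail I would first fix $t > 0$ and apply Markov's inequality to the nonnegative variable $e^{t|I^k|}$, obtaining $\mathbb{P}_{I^k}[|I^k| \geq (1+\delta)b] \leq e^{-t(1+\delta)b}\,\mathbb{E}_{I^k}[e^{t|I^k|}]$. By independence the moment generating function factors, $\mathbb{E}_{I^k}[e^{t|I^k|}] = \prod_{i=1}^p\left(1 + \pi_i^k(e^t - 1)\right)$, and applying $1 + x \leq e^x$ to each factor together with $\sum_i \pi_i^k = b$ gives $\mathbb{E}_{I^k}[e^{t|I^k|}] \leq e^{(e^t - 1)b}$. Substituting and choosing $t = \ln(1+\delta)$ yields the classical form
\[
\mathbb{P}_{I^k}\!\left[|I^k| \geq (1+\delta)b\right] \leq \left(\frac{e^{\delta}}{(1+\delta)^{1+\delta}}\right)^{b},
\]
after which it remains only to verify the scalar inequality $\delta - (1+\delta)\ln(1+\delta) \leq -\delta^2/(2+\delta)$ for $\delta \geq 0$ --- equivalently $\ln(1+\delta) \geq 2\delta/(2+\delta)$ --- which follows since both sides vanish at $\delta = 0$ and the derivative of their difference equals $\delta^2/\big((1+\delta)(2+\delta)^2\big) \geq 0$.

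The lower tail is entirely symmetric: I would fix $t > 0$, apply Markov to $e^{-t|I^k|}$, factor $\mathbb{E}_{I^k}[e^{-t|I^k|}] \leq e^{(e^{-t}-1)b}$ as before, and optimize with $t = -\ln(1-\delta)$ (legitimate because $\delta \in (0,1)$) to reach $\mathbb{P}_{I^k}[|I^k| \leq (1-\delta)b] \leq \left(e^{-\delta}/(1-\delta)^{1-\delta}\right)^{b}$; one then finishes using $(1-\delta)\ln(1-\delta) \geq -\delta + \delta^2/2$ on $[0,1)$, which holds because both sides agree to first order at $0$ and the second derivative of their difference is $\delta/(1-\delta) \geq 0$.

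I do not expect any genuinely hard step here --- the result is textbook and the excerpt itself flags it as ``a known result'' --- so the only place demanding care is the final pair of elementary analytic estimates that convert the tight-but-unwieldy bounds $(e^{\delta}/(1+\delta)^{1+\delta})^b$ and $(e^{-\delta}/(1-\delta)^{1-\delta})^b$ into the clean exponential forms stated; alternatively one could bypass these entirely by citing a standard Chernoff-bound reference.
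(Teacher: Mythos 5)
Your proof is correct. The paper itself offers no proof of this proposition---it is explicitly recorded as ``a known result'' for sums of independent Bernoulli trials---and your argument is precisely the standard one any cited reference would contain: Markov's inequality applied to $e^{\pm t|I^k|}$, factorization of the moment generating function by independence, the bound $1+x\leq e^x$, optimization of $t$, and the two elementary scalar inequalities $\ln(1+\delta)\geq 2\delta/(2+\delta)$ and $(1-\delta)\ln(1-\delta)\geq -\delta+\delta^2/2$, all of which you verify correctly.
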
 
When $b=16$, for instance, we see from \Cref{prop:chernoff} that the probability of obtaining a batch twice as large (that is, satisfying $|I^k|\geq 32$) is slightly less than $0.0025$. 
In practical situations, however, this can be problematic.
If one has $b$ parallel resources available for computation, then one does not want to underutilize---or, worse, attempt to overutilize---the resources by having a realization of $I^k$ be too small or too large, respectively. 


Therefore,  when we must compute a batch of a fixed size $b$ due to computational constraints, 
we consider a process called \emph{conditional Poisson sampling}, which is described in \Cref{alg:poisson}. 

\begin{algorithm}[h!]
\caption{Conditional Poisson sampling for selecting $I_k$ or $J_k$} 
\label{alg:poisson}
\textbf{Input: } Batch size $b>0$, error bounds $\{d_i^k\}_{i=1}^p.$\\
Sort $d_{(1)}^k\leq \dots \leq d_{(p)}^k$.\\
Compute probabilities $\{\pi^k_i\}_{i=1}^p$ according to \cref{eq:opt_probs}.\\
Transform the probabilities $\{\pi^k_i\}$ into $\{\tilde{\pi}^k_i\}$ using \Cref{alg:deville}.\\
$A\gets\emptyset$.\\
\While{$|A|\neq b$}{
Perform independent sampling with transformed probabilities $\tilde{\pi}^k_i$ to obtain $A$.
}
\textbf{Output: }$A$, original probabilities $\{\pi^k_i\}$.
\end{algorithm}

\Cref{alg:deville} is stated in \Cref{sec:deville}.
\Cref{alg:deville} was developed over several papers \cite{chen1994weighted,chen2000general}
and provably takes a set of desired inclusion probabilities $\pi_i^k$ and transforms them such that the randomized output of \Cref{alg:poisson}, $I_k, J_k$, satisfies the desired inclusion probabilities. 

We make two remarks.
Firstly, \Cref{alg:poisson} is a rejection method, which might raise concerns about stopping time. 
However, in the details of \Cref{alg:deville}, there is a degree of freedom in the transformation that allows us to normalize $\{\tilde{\pi}^k_i\}$ such that they sum to $b$. 
Thus, the expected size of $A$, as obtained by independent sampling, is $b$ in each iteration of the while loop. 
Therefore, it is reasonable that this rejection sampling has a short expected stopping time; this is observed in practice, and is so unconcerning that we don't empirically demonstrate this. 

Secondly, although we are guaranteed (up to finite precision) that $\mathbb{P}[i\in I_k] = \pi^k_i$ for $I_k$ sampled by \Cref{alg:poisson}, there is certainly no guarantee that $\mathbb{P}[i,j\in I_k] = \pi^k_{ij}$.
However, as noted in \cite{aires1999algorithms}, these second-order inclusion probabilities are often remarkably close to $\pi^k_{ij}$.
In \Cref{sec:deville}, we provide an inexpensive formula derived from these works for computing the second-order inclusion probabilities associated with conditional Poisson samples generated by \Cref{alg:poisson}.

\subsection{Additional models beyond \cref{eq:first_order}}
\label{eq:beyondfo}

The results derived in \Cref{sec:optprob} are valid for other classes of models beyond \cref{eq:first_order}. 
For any new class of models $m_i(\xb;\cb^k_i)$ one can apply the same development as used for \cref{eq:first_order} provided one can derive a (meaningful) upper bound on $d_i^{k,I^k}$ (\cref{eq:global1}) and $d_i^{k,J^k}$ (\cref{eq:global2}), such as those  in \cref{eq:global1_fo} and \cref{eq:global2_fo} for \cref{eq:first_order}.
In the following three subsections we introduce additional classes of models and derive the corresponding upper bounds on variance. 

\subsubsection{Linear interpolation models}
For a second class of models, motivated by model-based derivative-free (also known as zeroth-order) optimization, we consider models of the form
\begin{equation}
\label{eq:zeroth_order}
m_i(\xb;\cb^k_i) = F_i(\cb^k_i) + \gb_i^k(\cb^k_i;\delta_i)^\top (\xb - \cb^k_i), \tag{ZO}
\end{equation}
where $\gb_i^k(\cb^k_i;\delta_i)$ denotes an approximate gradient computed by linear interpolation on a set of interpolation points contained in a ball $\cB(\cb^k_i;\delta_{i})$. 
The use of the notation $\delta_i$, as opposed to $\Delta_k$, is intended to denote that $\delta_i$ is a parameter that is potentially updated independently of the iteration $k$ of \Cref{alg:dfotr}. 

On iterations where $i\in I^k$, one would update a model $m_i(\xb;\cb^k_i)$ of the form \cref{eq:zeroth_order} to be fully linear on $\cB(\xb^k;\Delta_k)$. Algorithms for performing such updates are common in model-based derivative-free optimization literature; see, for instance, \cite[Chapter 3]{Conn2009a}. 
Suppose the model gradient term $\gb(\cb_i^k;\delta_i)$ of  \cref{eq:zeroth_order} is constructed by linear interpolation on a set of $n+1$ points
$Y_i = \{\cb_i^k=\vb^0, \vb^1,\dots,\vb^n\} \subset \cB(\cb_i^k;\delta_i)$, where $\delta_i$ denotes the value of $\Delta_j$ on the last iteration $j\leq k$, where $i\in I_j$. It will be convenient to assume the following about sets of interpolation points.

\begin{assumption}
\label{ass:Y}
The set of points $\{\cb_i^k,\vb^1,\dots,\vb^n\} \subset \cB(\cb^k_i;\delta_i)$ is poised for linear interpolation.
\end{assumption}

We denote, for any $\xb$ of interest, 
\begin{equation}\label{eq:vmatrix}
 V_{Y_i} = \left[\vb^1 - \cb_i^k, \;\; \dots, \;\; \vb^n-\cb_i^k\right],
\end{equation}
and
\begin{equation}\label{eq:vhatbound}
\hat{V}_{Y_i}(\xb;\cb^k_i) = \frac{1}{\max\{\delta_i,\|\xb-\cb^k_i\|\}}V_{Y_i}.
\end{equation} 
We remark that \cref{eq:vmatrix} has no dependence on $\xb$, and hence the notation $V_{Y_i}$ does not involve $\xb$.
This is in contrast to \cref{eq:vhatbound}, where $\xb$ is explicitly involved in the scaling factor. 
Under \Cref{ass:Y}, both of the matrices \cref{eq:vmatrix} and \cref{eq:vhatbound} are invertible. 
In this setting we obtain the following by altering the proofs of \cite[Theorems 2.11, 2.12]{Conn2009a}.

\begin{theorem}
\label{thm:dfo_bound}
Under \Cref{ass:f} and \Cref{ass:Y}, it follows that for all $\xb\in \Reals^n$, 
\begin{equation}
\label{eq:dfo_bound1}
F_i(\xb) - m_i(\xb;\cb_i^{k})
\leq 
 \frac{3L_i}{2}\|\xb-\cb^k_i\|^2 + \frac{L_i\sqrt{n}\|\hat{V}_{Y_i}^{-1}(\xb;\cb^k_i)\|}{2\max\{\delta_i,\|\xb-\cb^k_i\|\}}\delta_i^2\|\xb-\cb^k_i\| .
\end{equation}
\end{theorem}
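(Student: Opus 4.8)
The plan is to adapt the classical linear-interpolation error analysis underlying \cite[Theorems 2.11, 2.12]{Conn2009a} so that the bound holds for \emph{every} $\xb$, not merely for $\xb\in\cB(\cb_i^k;\delta_i)$; the normalization $\max\{\delta_i,\|\xb-\cb_i^k\|\}$ appearing inside $\hat{V}_{Y_i}$ is exactly what makes the estimate uniform in $\xb$. First I would bound the model-gradient error using the interpolation conditions. Since $\gb_i^k(\cb_i^k;\delta_i)$ interpolates $F_i$ on $Y_i=\{\cb_i^k=\vb^0,\vb^1,\dots,\vb^n\}$, subtracting the interpolation equation at $\vb^0=\cb_i^k$ from the one at $\vb^j$ gives $(\gb_i^k)^\top(\vb^j-\cb_i^k)=F_i(\vb^j)-F_i(\cb_i^k)$ for $j=1,\dots,n$; i.e., $V_{Y_i}^\top\gb_i^k$ is the vector whose $j$th entry is $F_i(\vb^j)-F_i(\cb_i^k)$. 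Taylor-expanding each $F_i(\vb^j)$ about $\cb_i^k$ and using the $L_i$-Lipschitz continuity of $\nabla F_i$ from \Cref{ass:f} shows that $V_{Y_i}^\top(\gb_i^k-\nabla F_i(\cb_i^k))$ has $j$th entry of magnitude at most $\tfrac{L_i}{2}\|\vb^j-\cb_i^k\|^2\leq\tfrac{L_i}{2}\delta_i^2$, hence Euclidean norm at most $\tfrac{\sqrt{n}L_i}{2}\delta_i^2$. Since $Y_i$ is poised, $V_{Y_i}$ is invertible; multiplying by $(V_{Y_i}^{-1})^\top$ and using $\|V_{Y_i}^{-1}\|=\|\hat{V}_{Y_i}^{-1}(\xb;\cb_i^k)\|/\max\{\delta_i,\|\xb-\cb_i^k\|\}$ (valid since $\hat{V}_{Y_i}(\xb;\cb_i^k)=V_{Y_i}/\max\{\delta_i,\|\xb-\cb_i^k\|\}$) yields
\[
\|\gb_i^k(\cb_i^k;\delta_i)-\nabla F_i(\cb_i^k)\|\;\leq\;\frac{\sqrt{n}L_i\,\|\hat{V}_{Y_i}^{-1}(\xb;\cb_i^k)\|}{2\max\{\delta_i,\|\xb-\cb_i^k\|\}}\,\delta_i^2 .
\]

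Next I would decompose, for an arbitrary $\xb$,
\[
F_i(\xb)-m_i(\xb;\cb_i^k)=\bigl[F_i(\xb)-F_i(\cb_i^k)-\nabla F_i(\xb)^\top(\xb-\cb_i^k)\bigr]+\bigl[\nabla F_i(\xb)-\gb_i^k\bigr]^\top(\xb-\cb_i^k).
\]
The first bracket has magnitude at most $\tfrac{L_i}{2}\|\xb-\cb_i^k\|^2$ by the integral form of Taylor's theorem together with $L_i$-Lipschitzness of $\nabla F_i$. For the second bracket, split $\nabla F_i(\xb)-\gb_i^k=(\nabla F_i(\xb)-\nabla F_i(\cb_i^k))+(\nabla F_i(\cb_i^k)-\gb_i^k)$, so $\|\nabla F_i(\xb)-\gb_i^k\|$ is at most $L_i\|\xb-\cb_i^k\|$ plus the bound from the previous step; multiplying by $\|\xb-\cb_i^k\|$ and adding the first bracket's bound produces $\tfrac{L_i}{2}\|\xb-\cb_i^k\|^2+L_i\|\xb-\cb_i^k\|^2$ plus the cross term, which is exactly \Cref{eq:dfo_bound1} since $\tfrac{L_i}{2}+L_i=\tfrac{3L_i}{2}$. (Carrying this out with absolute values gives the two-sided estimate; only the upper bound is recorded, since that is what is needed to build the computable variance surrogates \Cref{eq:global1_fo}--\Cref{eq:global2_fo}.)

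The step I expect to require the most care is the bookkeeping around the scaling $\max\{\delta_i,\|\xb-\cb_i^k\|\}$: one must verify that $\hat{V}_{Y_i}(\xb;\cb_i^k)$ has columns of norm at most $1$ regardless of where $\xb$ sits (true because $\|\vb^j-\cb_i^k\|\leq\delta_i\leq\max\{\delta_i,\|\xb-\cb_i^k\|\}$), and that replacing $\|V_{Y_i}^{-1}\|$ by $\|\hat{V}_{Y_i}^{-1}(\xb;\cb_i^k)\|/\max\{\delta_i,\|\xb-\cb_i^k\|\}$ leaves the bound meaningful in both regimes $\|\xb-\cb_i^k\|\leq\delta_i$ and $\|\xb-\cb_i^k\|>\delta_i$. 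Everything else is the routine Taylor-remainder estimation found in \cite[Chapter 3]{Conn2009a}.
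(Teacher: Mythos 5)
Your proposal is correct and follows essentially the same route as the paper's own proof: bound $\|\nabla F_i(\cb_i^k)-\gb_i^k(\cb_i^k;\delta_i)\|$ from the interpolation system via Taylor remainders and the scaled matrix $\hat{V}_{Y_i}$, then combine a second-order Taylor estimate of $F_i$ about $\cb_i^k$ with a triangle-inequality bound on the gradient error to obtain the $\tfrac{3L_i}{2}$ coefficient. The only difference is cosmetic --- you center the Taylor remainder at $\xb$ and bound $\|\nabla F_i(\xb)-\gb_i^k\|$, whereas the paper centers at $\cb_i^k$ --- and both bookkeepings yield \Cref{eq:dfo_bound1} exactly.
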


The proof is left to \Cref{sec:thm1}, since it is not particularly instructive on its own. 

On iterations such that $i\in I^k$, 
the set of points $Y_i$ would also be updated, first to include $\xb^k$ and then to guarantee poisedness of the updated $Y_i$ on $\cB(\xb^k;\Delta_k)$. This poisedness can be achieved, for instance, by using the same sampling and model improvement techniques as those employed in \texttt{POUNDERS}.
Thus, we may similarly conclude from the proof of \Cref{thm:dfo_bound} that 
\begin{equation}\label{eq:dfo_bound2}
\begin{array}{lll}
F_i(\xb) - m_i(\xb; \xb^k) & \leq &
\frac{3L_i}{2}\|\xb-\xb^k\|^2 + \frac{L_i\sqrt{n}\|\hat{V}_{Y_i}^{-1}(\xb;\xb^k)\|}{2\max\{\Delta_k,\|\xb-\xb^k\|\}}\Delta_k^2\|\xb-\xb^k\|\\ .
\end{array}
\end{equation}
Combining \cref{eq:dfo_bound1} and \cref{eq:dfo_bound2},  we have 
\begin{equation*}
\begin{array}{ll}
e(\xb;\xb^k,\cb_i^{k-1}) & \leq  \frac{3L_i}{2}\|\xb-\cb^k_i\|^2 + \frac{L_i\sqrt{n}\|\hat{V}_{Y_i}^{-1}(\xb;\cb^k_i)\|}{2\max\{\delta_i,\|\xb-\cb^k_i\|\}}\delta_i^2\|\xb-\cb^k\|\\
& + \frac{3L_i}{2}\|\xb-\xb^k\|^2 + \frac{L_i\sqrt{n}\|\hat{V}_{Y_i}^{-1}(\xb;\xb^k)\|}{2\max\{\Delta_k,\|\xb-\xb^k\|\}}\Delta_k^2\|\xb-\xb^k\|.
\end{array}
\end{equation*}
Recalling that $\|\hat{V}^{-1}_{Y_i}(\xb;\cb^k_i)\|/\max\{\delta_i,\|\xb-\cb^k_i\|\} = \|V^{-1}_{Y_i}\|$,
\begin{equation*}
\begin{array}{lll}
d^{k,I^k}_i & = & \displaystyle\max_{\xb\in\cB(x^k;\Delta_k)} e(\xb;\xb^k;\cb_i^{k-1}) \\
& \leq & L_i\left(\displaystyle\frac{3}{2}(\|\xb^k-\cb^k_i\| + \Delta_k)^2 + \frac{\sqrt{n}\|V_{Y_i}^{-1}\|}{2}\delta_i^2(\|\xb^k-\cb^k_i\|+\Delta_k)\right)\\
&& + L_i\left(\frac{3}{2}\Delta_k^2 + \frac{\sqrt{n}\|V^{-1}_{Y_i}\| }{2}\Delta_k^3\right) \\
\end{array}
\end{equation*}
and
\begin{equation*}
\begin{array}{lll}
d^{k,J^k}_i & =  &\max\left\{e(\xb^k;\xb^k;\cb_i^{k-1}), e(\xb^k+\sba^k;\xb^k;\cb_i^{k-1})\right\} \\
& \leq & L_i\max\left\{\frac{3}{2}\|\xb^k-\cb^k_i\|^2 + \frac{\sqrt{n}\|V_{Y_i}^{-1}\|}{2}\delta_i^2\|\xb^k-\cb^k_i\|\right.,\\
& & \frac{3}{2}\|\xb^k+\sba^k-\cb^k_i\|^2 + \frac{\sqrt{n}\|V_{Y_i}^{-1}\|}{2}\delta_i^2\|\xb^k+\sba^k-\cb^k_i\|\\
&  & + \left. \frac{3}{2}\|\sba^k\|^2 + \frac{\sqrt{n}\|V_{Y_i}^{-1}\|}{2}\Delta_k^2\|\sba^k\| \right\}.
\end{array}
\end{equation*}

\subsubsection{Gauss--Newton models \cref{eq:gauss_newton}} 
For a third class of models, we consider the case where each $F_i(\xb)$ in \cref{eq:sum} is of the form $\frac{1}{2}f_i(\xb)^2$. 
In this (nonlinear) least-squares setting, we can form the Gauss--Newton model by letting the $i$th model be defined as 
\begin{equation}
\label{eq:gauss_newton}
\begin{array}{rl}
m_i(\xb;\cb^k_i) =& \frac{1}{2}\left(f_i(\cb^k_i) + \nabla f_i(\cb^k_i)^\top (\xb-\cb^k_i)\right)^2 \\
 =& \frac{1}{2}f_i(\cb^k_i)^2  + f_i(\cb^k_i)\nabla f_i(\cb^k_i)^\top(\xb-\cb^k_i) \\
& + \frac{1}{2}(\xb-\cb^k_i)^\top\left(\nabla f_i(\cb^k_i)\nabla f_i(\cb^k_i)^\top\right)\left(\xb-\cb^k_i\right).
\end{array}
 \tag{FOGN} 
\end{equation} 
As in the case of \cref{eq:first_order}, updating a model of the form \cref{eq:gauss_newton} entails a function and gradient evaluation of $f_i$ at $\xb^k$. 

Noting that the second-order Taylor model of $F_i(\xb)$ centered on $\xb^k$ is given by
\begin{equation*}
\begin{array}{l}
\frac{1}{2}f_i(\xb^k)^2 + f_i(\xb^k)\nabla f_i(\xb^k)^\top (\xb-\xb^k) \\
+ \frac{1}{2}(\xb-\xb^k)^\top \left[ \nabla f_i(\xb^k)\nabla f_i(\xb^k)^\top +  f_i(\xb^k) \nabla^2 f_i(\xb^k)\right](\xb-\xb^k),
\end{array}
\end{equation*}
we can derive that
$$|F_i(\xb) - m_i(\xb;\xb^k)| \leq \displaystyle\frac{L_{\nabla^2 f_i}}{6}\|\xb-\xb^k\|^3 + \frac{1}{2}|f_i(\xb^k)|L_{\nabla f_i}\|\xb-\xb^k\|^2,$$
where $L_{\nabla^2 f_i}$ and $L_{\nabla f_i}$ denote (local) Lipschitz constants of $\nabla^2 f_i$ and $\nabla f_i$, respectively.
To avoid having to estimate $L_{\nabla^2 f_i}$, and justified in part because $\|\xb-\xb^k\|^3$ is dominated by $\|\xb-\xb^k\|^2$ in the limiting behavior of \Cref{alg:dfotr},\footnote{As per \cite[Theorem 4.11]{Chen2017}, as $k\to\infty$ in \Cref{alg:dfotr}, $\Delta_k\to 0$ almost surely.} 
we make the (generally incorrect) simplifying assumption that $L_{\nabla^2 f_i}=0$. Under this simplifying assumption, 
$$
e(\xb;\xb^k,\cb_i^{k-1}) \leq
\displaystyle\frac{|f_i(\cb^k_i)|L_{\nabla f_i}}{2}\|\xb-\cb^k_i\|^2 + 
\displaystyle\frac{|f_i(\xb^k)|L_{\nabla f_i}}{2}\|\xb-\xb^k\|^2.
$$
Because we do not know $|f_i(\xb^k)|$, we upper bound it by noting that
$$
\begin{array}{lll}
|f_i(\xb^k)| &\leq & \left|f_i(\cb^k_i) + \nabla f_i(\cb_i^k)^\top (\cb^k_i-\xb^k) + \frac{L_{\nabla f_i}}{2}\|\cb^k_i-\xb^k\|^2\right| \\
& \leq & \left|f_i(\cb_i^k)\right| + \left|\nabla f_i(\cb_i^k)^\top(\cb^k_i - \xb^k)\right| + \frac{L_{\nabla f_i}}{2}\left\|\cb^k_i-\xb^k\right\|^2 =: M\left(\xb^k,\cb^k_i\right),
\end{array}
$$
to arrive at
 \begin{equation*}
 e(\xb;\xb^k,\cb_i^{k-1}) \leq 
 \displaystyle\frac{|f_i(\cb^k_i)|L_{\nabla f_i}}{2}\|\xb-\cb^k_i\|^2 + 
\displaystyle\frac{L_{\nabla f_i}M(\xb^k,\cb^k_i)}{2}\|\xb-\xb^k\|^2.
 \end{equation*}
Thus,
\begin{equation*}
d_i^{k,I^k} \leq  \displaystyle\frac{|f_i(\cb^k_i)|L_{\nabla f_i}}{2}\left(\|\xb^k-\cb^k_i\| + \Delta_k\right)^2 + 
\displaystyle\frac{L_{\nabla f_i}M(\xb^k,\cb^k_i)}{2}\Delta_k^2.
\end{equation*}
and
\begin{equation*}
d_i^{k,J^k} \leq \displaystyle\frac{L_{\nabla f_i}}{2}\max\left\{ |f_i(\cb^k_i)|\|\xb^k-\cb^k_i\|^2,  
 |f_i(\cb^k_i)|\|\xb^k + \sba^k-\cb^k_i\|^2 + 
M(\xb^k,\cb^k_i)\|\sba^k\|^2\right\}.
\end{equation*}

\subsubsection{Zeroth-order Gauss--Newton models \cref{eq:zero_gauss_newton}} 
Here, as in \texttt{POUNDERS}, we consider the case where each $F_i(\xb)$ in \cref{eq:sum} is of the form $\frac{1}{2}f_i(\xb)^2$.
Rather than having access to first-order information, however, we construct a zeroth-order model as in \cref{eq:zeroth_order}, leading to a zeroth-order Gauss--Newton model
\begin{equation}
\label{eq:zero_gauss_newton}
\begin{array}{rl}
m_i(\xb;\cb_i^k) =& \frac{1}{2}\left(f_i(\cb_i^k) + \gb_i^k(\cb_i^k;\delta_i)^\top(\xb-\cb^k_i) \right)^2\\ 
 =& \frac{1}{2}f_i(\cb^k_i)^2  + f_i(\cb^k_i) \gb_i^k(\cb_i^k;\delta_i)^\top(\xb-\cb^k_i) \\
& + \frac{1}{2}(\xb-\cb^k_i)^\top\left( \gb_i^k(\cb_i^k;\delta_i) \gb_i^k(\cb_i^k;\delta_i)^\top\right)(\xb-\cb^k_i), 
\end{array}
\tag{ZOGN}
\end{equation}
where $\gb_i^k(\cb_i^k;\delta_i)$ is constructed such that $f_i(\cb_i^k)+\gb_i^k(\cb_i^k;\delta_i)^\top(\xb-\cb^k_i)$ is a fully linear model of $f_i$ on $\cB(\cb^k_i;\delta_i)$, for example, by linear interpolation.
The choice of notation $\delta_i$ is meant to suggest that the same linear interpolation as used in \cref{eq:zeroth_order} is applicable to obtain a model of $f_i$. 
\texttt{POUNDERS} in fact employs an additional fitted quadratic term $H_i^k(\cb_i^k;\delta_i)$ in \cref{eq:zero_gauss_newton}; for simplicity in presentation, we assume that no such term is used here. 
However, as long as one supposes that $H_i^k(\cb_i^k;\delta_i)$ is uniformly (over $k$) bounded in spectral norm, then analogous results are easily derived. 

We note that given a bound on $\|\nabla f_i(\cb^k_i) - \gb(\cb^k_i;\delta_i)\|$, we immediately attain a bound
$$\|\nabla F_i(\cb^k_i) - f_i(\cb^k_i)\gb(\cb^k_i;\delta_i)\| = |f_i(\cb^k_i)|\|\nabla f_i(\cb^k_i) - \gb(\cb^k_i;\delta_i)\|. $$ 
Thus, using the same notation as in \cref{eq:zeroth_order} and \cref{eq:gauss_newton}, we can follow the proof of \Cref{thm:dfo_bound} starting from \cref{eq:grad_at_center_bound} to conclude
\begin{equation*}
F_i(\xb) - m_i(\xb;\cb_i^{k})
\leq 
 \frac{3L_i|f_i(\cb^k_i)|}{2}\|\xb-\cb^k_i\|^2 + \frac{L_i|f_i(\cb^k_i)|\sqrt{n}\|\hat{V}_{Y_i}^{-1}(\xb;\cb^k_i)\|}{2\max\{\delta_i,\|\xb-\cb^k_i\|\}}\delta_i^2\|\xb-\cb^k_i\|.
\end{equation*}

We can similarly conclude from the logic employed for \cref{eq:zeroth_order} that
\begin{equation}\label{eq:global1_zogn}
\begin{array}{lll}
d^{k,I^k}_i 
& \leq & L_i|f_i(\cb^k_i)|\left(\displaystyle\frac{3}{2}(\|\xb^k-\cb^k_i\| + \Delta_k)^2 + \frac{\sqrt{n}\|V_{Y_i}^{-1}\|}{2}\delta_i^2(\|\xb^k-\cb^k_i\|+\Delta_k)\right)\\
&& + L_i|f_i(\cb^k_i)|\left(\frac{3}{2}\Delta_k^2 + \frac{\sqrt{n}\|V^{-1}_{Y_i}\| }{2}\Delta_k^3\right) \\
\end{array}
\end{equation}
and
\begin{equation}\label{eq:global2_zogn}
\begin{array}{lll}
d^{k,J^k}_i & \leq & L_i|f_i(\cb^k_i)|\max\left\{\frac{3}{2}\|\xb^k-\cb^k_i\|^2 + \frac{\sqrt{n}\|V_{Y_i}^{-1}\|}{2}\delta_i^2\|\xb^k-\cb^k_i\|\right.,\\
& & \frac{3}{2}\|\xb^k+\sba^k-\cb^k_i\|^2 + \frac{\sqrt{n}\|V_{Y_i}^{-1}\|}{2}\delta_i^2\|\xb^k+\sba^k-\cb^k_i\|\\
&  & + \left. \frac{3}{2}\|\sba^k\|^2 + \frac{\sqrt{n}\|V_{Y_i}^{-1}\|}{2}\Delta_k^2\|\sba^k\| \right\}.
\end{array}
\end{equation}

\subsection{Convergence guarantees}\label{sec:storm} 
For brevity, we do not provide a full proof of convergence of \Cref{alg:dfotr} but instead appeal to the first-order convergence results for \texttt{STORM} \cite{Chen2017, BCMS2018}.
The \texttt{STORM} framework is a trust-region method with stochastic models and function value estimates;
 \Cref{alg:dfotr} can be seen as a special case of \texttt{STORM}, 
where the stochastic models are given by the ameliorated model $\hat{m}_{I_k}$ and the stochastic estimates are computed via $\hat{m}_{J_k}$. 

With these specific choices of models and estimates, we record the following definitions \cite[Section 3.1]{BCMS2018}. 

\begin{definition}
The ameliorated model $\hat{m}_{I_k}$ is \emph{$(\kappa_f,\kappa_g)$-fully linear of $f$ on $\cB(\xb^k;\Delta_k)$} provided that for all $\yb\in\cB(\xb^k;\Delta_k)$,
\begin{equation*}
|f(\yb)-\hat{m}_{I_k}(\yb)| \leq \kappa_f\Delta_k^2 \quad \text{ and } \quad \|\nabla f(\yb) - \nabla \hat{m}_{I_k}(\xb^k)\| \leq \kappa_g \Delta_k.
\end{equation*}
\end{definition}

\begin{definition}
The ameliorated model values $\{\hat{m}_{J_k}(\xb^k),\hat{m}_{J_k}(\xb^k+\sba^k)\}$ are \emph{$\epsilon_f$-accurate estimates of $\{f(\xb^k),f(\xb^k+\sba^k)\}$}, respectively, 
provided that given $\Delta_k$, both
\begin{equation*}
|\hat{m}_{J_k}(\xb^k) - f(\xb^k)| \leq \epsilon_f\Delta_k^2 \quad \text{ and } \quad |\hat{m}_{J_k}(\xb^k + \sba^k) - f(\xb^k + \sba^k)| \leq \epsilon_f\Delta_k^2.
\end{equation*}
\end{definition}

Denote by $\mathcal{F}_{k-1}$ the $\sigma$-algebra generated by the models 
$\{\hat{m}_{I_0}, \hat{m}_{I_1}, \dots, \hat{m}_{I_{k-1}}\}$
 and the estimates 
$\{\hat{m}_{J_0}(\xb^0), \hat{m}_{J_0}(\xb^0+\sba^0), \hat{m}_{J_1}(\xb^1), \hat{m}_{J_1}(\xb^1+\sba^1), \dots, \hat{m}_{J_{k-1}}(\xb^{k-1}), \hat{m}_{J_{k-1}}(\xb^{k-1}+\sba^{k-1})\}$.
Additionally denote by $\mathcal{F}_{k-1/2}$ the $\sigma$-algebra generated by the models $\{\hat{m}_{I_0}, \hat{m}_{I_1}, \dots, \hat{m}_{I_{k-1}},$
$\hat{m}_{I_k}\}$ and the estimates
$\{\hat{m}_{J_0}(\xb^0), \hat{m}_{J_0}(\xb^0+\sba^0), \hat{m}_{J_1}(\xb^1), \hat{m}_{J_1}(\xb^1+\sba^1), \dots, \hat{m}_{J_{k-1}}(\xb^{k-1}),$
$\hat{m}_{J_{k-1}}(\xb^{k-1}+\sba^{k-1})\}$.
That is, $\mathcal{F}_{k-1/2}$ is $\mathcal{F}_{k-1}$, but additionally including the model $\hat{m}_{I_k}$. 
Then, we may additionally define the following two properties of sequences. 

\begin{definition}
A sequence of random models $\{\hat{m}_{I_k}\}_{k=0}^\infty$ is \emph{$\alpha$-probabilistically $(\kappa_f,\kappa_g)$-fully linear with respect to the sequence $\{\cB(\xb^k;\Delta_k)\}_{k=0}^\infty$}
provided that for all $k$,
$$\mathbb{P}\left[\hat{m}_{I_k} \text{ is a } (\kappa_f,\kappa_g)\text{-fully linear model of } f \text{ on } \cB(\xb^k;\Delta_k) | \mathcal{F}_{k-1}  \right] \geq \alpha.$$
\end{definition}

\begin{definition}
A sequence of estimates $\{\hat{m}_{J_k}(\xb^k), \hat{m}_{J_k}(\xb^k+\sba^k)\}_{k=0}^\infty$ is \emph{$\beta$-probabilistically $\epsilon_f$ accurate with respect to the sequence $\{\xb^k,\Delta_k,\sba^k\}_{k=0}^\infty$} provided that for all $k$,
$$\begin{array}{l}
\mathbb{P}\left[ \hat{m}_{J_k}(\xb^k), \hat{m}_{J_k}(\xb^k+\sba^k)\} \text{ are } \epsilon_f \text{-accurate estimates of } \right.\\
\left.f(\xb^k) \text{ and } f(\xb^k+\sba^k)  | \mathcal{F}_{k-1/2}  \right] \geq \beta.
\end{array}$$
\end{definition}

With these definitions we can state a version of \cite[Theorem 4]{BCMS2018}. 

\begin{theorem}\label{thm:convergence}
Let \Cref{ass:f} hold. 
Fix $\kappa_f, \kappa_g > 0$, and fix $\epsilon_f \in [0, \kappa_f]$. 
Suppose that, uniformly over $k$, $\|\nabla^2 \hat{m}_{I_k}(\xb^k)\|\leq \kappa_{h}$  for some $\kappa_h\geq 0$. 
Then there exist $\alpha, \beta \in(\frac{1}{2}, 1)$ (bounded away from 1) such that, 
provided that $\{\hat{m}_{I_k}\}$ is $\alpha$-probabilistically $(\kappa_f,\kappa_g)$-fully linear with respect to the sequence $\{\cB(\xb^k;\Delta_k)\}$ generated by \Cref{alg:dfotr} and provided that
$\{\hat{m}_{J_k}(\xb^k), \hat{m}_{J_k}(\xb^k+\sba^k)\}_{k=0}^\infty$ is $\beta$-probabilistically $\epsilon_f$-accurate with respect to the sequence $\{\xb^k,\Delta_k,\sba^k\}_{k=0}^\infty$ generated by \Cref{alg:dfotr}, then 
the sequence $\{\xb^k\}_{k=0}^\infty$ generated by \Cref{alg:dfotr} satisfies, with probability 1, 
$$\displaystyle\lim_{k\to\infty} \|\nabla f(\xb^k)\| \to 0.$$ 
\end{theorem}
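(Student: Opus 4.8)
The plan is to obtain \Cref{thm:convergence} as an immediate corollary of the first-order convergence theory for \texttt{STORM} \cite{Chen2017,BCMS2018}: the work consists entirely in verifying that \Cref{alg:dfotr}, together with its particular random models and estimates, is an instance of the \texttt{STORM} framework that satisfies every hypothesis of \cite[Theorem 4]{BCMS2018}. No new estimates are needed; the argument is a careful correspondence check followed by a citation.

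First I would establish the algorithmic correspondence. In \Cref{alg:dfotr} the trial step $\sba^k$ is required to achieve a fraction of Cauchy decrease for the random model $\hat{m}_{I^k}$; the acceptance test --- $\rho_k\geq\eta_1$ together with the criticality safeguard $\Delta_k\leq\eta_2\|\nabla\hat{m}_{I^k}(\xb^k)\|$ --- and the trust-region update ($\Delta_{k+1}\gets\min\{\gamma\Delta_k,\Delta_{\max}\}$ on a successful iteration and $\Delta_{k+1}\gets\gamma^{-1}\Delta_k$ otherwise) are exactly those used in \texttt{STORM}. Hence \Cref{alg:dfotr} is a \texttt{STORM} iteration whose stochastic model is the ameliorated model $\hat{m}_{I^k}$ of \Cref{eq:saga_model} and whose stochastic objective estimates at $\xb^k$ and $\xb^k+\sba^k$ are $\hat{m}_{J^k}(\xb^k)$ and $\hat{m}_{J^k}(\xb^k+\sba^k)$. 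I would also point out that the filtrations $\mathcal{F}_{k-1}$ and $\mathcal{F}_{k-1/2}$ introduced above are precisely the two $\sigma$-algebras used in the \texttt{STORM} analysis: the model $\hat{m}_{I^k}$ is drawn conditionally on $\mathcal{F}_{k-1}$ (and is $\mathcal{F}_{k-1/2}$-measurable), while the estimates via $J^k$ are drawn conditionally on $\mathcal{F}_{k-1/2}$.

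Next I would check the hypotheses of \cite[Theorem 4]{BCMS2018} one by one. \Cref{ass:f} provides exactly what \texttt{STORM} assumes of the objective: $f$ is bounded below and $\nabla f$ is Lipschitz. The assumed uniform bound $\|\nabla^2\hat{m}_{I_k}(\xb^k)\|\leq\kappa_h$ supplies the uniform bound on model Hessians that \texttt{STORM} requires. The $\alpha$-probabilistic $(\kappa_f,\kappa_g)$-full-linearity of $\{\hat{m}_{I_k}\}$ with respect to $\{\cB(\xb^k;\Delta_k)\}$ and the $\beta$-probabilistic $\epsilon_f$-accuracy of $\{(\hat{m}_{J_k}(\xb^k),\hat{m}_{J_k}(\xb^k+\sba^k))\}$, as defined above, are verbatim the ``probabilistically fully linear models'' and ``probabilistically accurate estimates'' hypotheses of that theorem. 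Finally, the claim that admissible $\alpha,\beta$ exist in $(\tfrac12,1)$, bounded away from $1$, is itself the content of \cite[Theorem 4]{BCMS2018}: the required thresholds for $\alpha$ and $\beta$ are explicit expressions in $\kappa_f,\kappa_g,\epsilon_f,\kappa_h,\eta_1,\eta_2,\gamma$ that are strictly less than $1$. Taking $\alpha,\beta$ at or above these thresholds, the cited theorem yields $\lim_{k\to\infty}\|\nabla f(\xb^k)\|=0$ with probability one.

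The main obstacle --- and the only point requiring care --- is the measurability bookkeeping needed to make the conditional statements well posed. One must confirm that all randomness entering $\hat{m}_{I^k}$ (the draw of $I^k$, the induced center updates $\cb^k_i\gets\xb^k$ for $i\in I^k$, and the resulting model values and gradient) is adapted so that the event ``$\hat{m}_{I^k}$ is $(\kappa_f,\kappa_g)$-fully linear on $\cB(\xb^k;\Delta_k)$'' is $\mathcal{F}_{k-1/2}$-measurable and has conditional probability given $\mathcal{F}_{k-1}$ equal to the quantity controlled by $\alpha$; and symmetrically for $J^k$ relative to $\mathcal{F}_{k-1/2}$. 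This follows directly from the order of operations in \Cref{alg:dfotr} and the definitions of $\mathcal{F}_{k-1}$ and $\mathcal{F}_{k-1/2}$, but it is the step where a careless presentation could go wrong. A second, minor point is that the extra criticality term $\Delta_k\leq\eta_2\|\nabla\hat{m}_{I^k}(\xb^k)\|$ appearing in both the acceptance and the trust-region-increase tests is exactly the safeguard already present in \texttt{STORM}, so no modification of its analysis is needed. With these verifications in hand, \Cref{thm:convergence} follows by appeal to \cite[Theorem 4]{BCMS2018}.
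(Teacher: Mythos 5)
Your proposal matches the paper exactly: the paper explicitly declines to give a full proof and instead observes that \Cref{alg:dfotr} is an instance of the \texttt{STORM} framework, then invokes \cite[Theorem 4]{BCMS2018} after setting up the filtrations $\mathcal{F}_{k-1}$, $\mathcal{F}_{k-1/2}$ and the probabilistic full-linearity and accuracy definitions. Your correspondence check is, if anything, more detailed than what appears in the paper.
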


\cite[Theorem 4]{BCMS2018} additionally proves a convergence rate associated with \Cref{thm:convergence}, essentially on the order of $1/\epsilon^2$ many iterations to attain $\|\nabla f(\xb^k)\|\leq\epsilon$. 

Having computed the pointwise variance of the ameliorated models $\hat{m}_{I^k}$ and $\hat{m}_{J^k}$ in \cref{eq:is_var}, we may appeal directly to Chebyshev's inequality to obtain, at each $\xb\in \Reals^n$ and for any $C>0$, 
\begin{equation}\label{eq:chebyshev}
\Pbb\left[\left|\hat{m}_{I_k}(\xb) - m^k(\xb)\right| \geq C\Delta_k^2\right] \leq \displaystyle\frac{\displaystyle\Vbb_{I^k}\left[\hat{m}_{I_k}(\xb)\right]}{C^2\Delta_k^4} \qquad \forall \xb \in \Reals^n.
\end{equation}
We stress that \eqref{eq:chebyshev} holds for any $\xb\in \Reals^n$. 
Therefore, we can localize the conservative bound in \cref{eq:chebyshev} to the specific trust region $\cB(\xb^k; \Delta_k)$.
We have previously demonstrated, for each of the classes of models that we have considered, that $m^k$ is a 
 $(\kappa^{'}_f,\kappa^{'}_g)$-fully-linear model of $f$ on $\cB(\xb^k;\Delta_k)$ for appropriate constants $\kappa^{'}_f,\kappa^{'}_g$.
In particular,
 \begin{equation}
 \label{eq:generic_fl}
     |m^k(\xb) - f(\xb)| \leq \kappa^{'}_f \Delta_k^2 \qquad \forall \xb\in\cB(\xb^k; \Delta_k). 
 \end{equation}

Thus, we obtain from \cref{eq:chebyshev} and \cref{eq:generic_fl} that
\begin{equation}\label{eq:prob_fl}
\Pbb\left[\left|\hat{m}_{I_k}(\xb) - f(\xb)\right| \geq (C+\kappa^{'}_f)\Delta_k^2\right] \leq \displaystyle\frac{\displaystyle\Vbb_{I^k}\left[\hat{m}_{I_k}(\xb)\right]}{C^2\Delta_k^4}
\qquad \forall \, \xb \in \cB(\xb^k;\Delta_k).
\end{equation}

We remark that choosing $C + \kappa^{'}_f \leq \min\{\kappa_f, \epsilon_f\}$ in  \cref{eq:prob_fl} is not entirely sufficient to demonstrate that the model and estimate sequences employed by \Cref{alg:dfotr} are  $\alpha$-probabilistically $(\kappa_f,\kappa_g)$-fully linear and $\beta$-probabilistically $\epsilon_f$-accurate, respectively.
We identify two avenues for completing a proof of convergence for \Cref{alg:dfotr}. 
First, one could use the properties of a given class of models in tandem with the regularity of the component functions $F_i$ to show, via a union bound over a suitable covering set of points within $\cB(\xb^k;\Delta_k)$, that probabilistic full-linearity holds. 
Second, one could consider analyzing a variant of STORM that only assumes a condition resembling \cref{eq:prob_fl} holds on each iteration, as opposed to probabilistic full-linearity. 
Both of these routes would involve significant analysis, which is beyond the scope of this paper.
Nonetheless, based on \cref{eq:prob_fl} nearly resembling a fully-linearity condition, we suggest the scheme in \Cref{alg:dynamic_batchsize} for choosing $I_k, J_k$ in each iteration, which uses \Cref{alg:poisson} as a subroutine. 
We note that \Cref{alg:dynamic_batchsize} must terminate eventually because if $b=p$, then, as observed before, $V=0$. 

\begin{algorithm}[h!]
\caption{Determining a dynamic batch size for $I_k$ or $J_k$}
\label{alg:dynamic_batchsize}
\textbf{Input: } Computational resource size $r$, trust-region radius $\Delta_k$, accuracy constant $C>0$, probability parameter $\pi\in(\frac{1}{2},1)$. \\
$b\gets r$.\\
\While{true}{
Compute error bounds $\{d_i^k\}$. \\
Compute $A$ and $\{\pi^k_i\}$ using \Cref{alg:poisson} with batch size $b$ and error bounds $\{d_i^k\}$.\\
Compute approximate upper bound on variance according to \cref{eq:is_var}; call it $V$. \\
\If{$V > (1-\pi)C^2\Delta_k^4$}{
\vspace{6pt}
	$b \gets \min\{b + r,p\}$
}
\Else{\textbf{Return: } $A, $ probabilities $\{\pi^k_i\}$}
}
\end{algorithm} 

\section{Numerical Experiments}

We implemented a version of \Cref{alg:dfotr} in MATLAB 
and focus on the models \cref{eq:first_order} and \cref{eq:zero_gauss_newton}. 
Code is available at \url{https://github.com/mmenickelly/sampounders/}. 

\subsection{Test problems}
We focus on three simply structured objective functions in order to better study the behavior of \Cref{alg:dfotr}.  

\subsubsection{Logistic loss function} 
We test this function to explore models of the form \cref{eq:first_order} within \Cref{alg:dfotr}. 
Given a dataset $\{(\ab_{x,i}, \ab_{y,i})\}_{i=1}^p$, where each $(\ab_{x,i}, \ab_{y,i})\in\mathbb{R}^n\times\{-1,1\}$, we seek a classifier parameterized by $\xb\in\mathbb{R}^n$ that minimizes 
$f(\xb)$, where each component function has the form
$$F_i(\xb) = -\frac{1}{p}\log\left(\frac{1}{1+\exp(-\ab_{y,i}\ab_{x,i}^\top \xb)}\right) + \frac{\lambda}{2p}\|\xb\|^2,$$,
where $\lambda>0$ is a constant regularizer added in this experiment only to promote strong convexity (and hence unique solutions). 
We note that this model assumes that the bias term of the linear model being fitted is 0.
We randomly generate data via a particular method, which we now explain, inspired by the numerical experiments in \cite{NeedellSrebroWard2014}.
We first generate an optimal solution $\xb^*$ from a normal distribution with identity covariance centered at 0. 
We then generate data vectors $\ab_{x,i}$ according to one of three different modes of data generation:
\begin{enumerate}
\item \textbf{Imbalanced}: For $i=1,\dots,p$, each entry of $\ab_{x,i}$ is generated from a normal distribution with mean 0 and variance 1. 
The vector $\ab_{x,p}$ is then multiplied by 100. 
\item \textbf{Progressive}: For $i=1,\dots,p$ each entry of $\ab_{x,i}$ is generated from a normal distribution with mean 0 and variance 1 and is then multiplied by $i$. 
\item \textbf{Balanced}: For $i=1,\dots,p$, each entry of $\ab_{x,i}$ is generated from a normal distribution with mean 0 and variance 1. 
\end{enumerate}
We then generate $p$ random values $\{r_i\}_{i=1}^p$ uniformly from the interval $[0,1]$ and generate random labels $\{\ab_{y,i}\}$ via
\begin{equation*}
\ab_{y,i} = \left\{
\begin{array}{ll}
1 & \text{ if } r_i < \displaystyle\frac{1}{1 + \exp(-\ab_{x,i}^\top \xb^*)}\\
-1 & \text{ otherwise. }\\
\end{array}
\right.
\end{equation*}
It is well-known (see, e.g., \cite{Schmidt2013}[Section 5.1]) that $L_i$ in this logistic loss setting can be globally bounded as $L_i \leq \frac{1}{p}\left(\frac{\|\ab_{x,i}\|^2}{4} + \lambda\right)$. 
In our experiments we set $n=p=256$ and let $\lambda=0.1$. 

\subsubsection{Generalized Rosenbrock functions}\label{sec:genrosenbr}

Given a set of weights $\{\alpha_i\}_{i=1}^p$, where $p$ is an even integer, we define $f(\xb)$ componentwise as
\begin{equation}
\label{eq:rosen1}
F_i(\xb) =
\left\{
\begin{array}{ll}
 (10\alpha_i(\xb^2_{i} - \xb_{i+1}))^2 & \text{ if $i$ is odd}\\
(\alpha_i(\xb_{i-1}-1))^2\ & \text{ if $i$ is even.}\\
\end{array}
\right.
\end{equation}
The function $f(\xb)$ defined via \cref{eq:rosen1} satisfies $f(\xb^*)=0$ at the unique point $\xb^*=\oneb_n$, where $\oneb_n$ is the $n$-dimensional vector of ones.
We use this function to test models of the form \cref{eq:zero_gauss_newton}.
We observe that 
(recall, in our notation, $F_i(\xb) = f_i^2(\xb)$)
\begin{equation*}
[\nabla f_i(\xb)]_j = 
\begin{cases}
20\alpha_i\xb_j & \text{ if $i$ is odd, and $i=j$}\\
-10\alpha_i & \text{ if $i$ is odd, and $j=i+1$}\\
\alpha_i & \text{ if $i$ is even, and $j = i-1$}\\
0 & \text{ otherwise. } 
\end{cases}
\end{equation*}
Therefore, the Lipschitz constants $L_{i}$ satisfy $L_i = 0$ for all $i$ even.
Assuming that $\{\xb^k\}$ remains bounded in $[-1,1]^n$, we can derive an upper bound $L_{i} = 20\alpha_i$ for all $i$ odd. 

Similarly to the logistic loss experiments, we generate $\alpha_i$ according to three modes of generation:
\begin{enumerate}
\item \textbf{Imbalanced}: For $i=1,\dots,p-2$, $\alpha_i = 1$. We then choose $\alpha_{p-1}$ and $\alpha_p$ as $p$. 
\item \textbf{Progressive}: For $i=1,\dots,p$, $\alpha_i  = i$. 
\item \textbf{Balanced}: For $i=1,\dots,p$, $\alpha_i = 1$.
\end{enumerate}

To generate random problem instances, we simply change the initial point to be generated uniformly at random from $[-1,1]^p$. 
Once again, we generate 30 such random instances and run each variant of the algorithm applied to a random instance with three different 
initial seeds, yielding a total of 90 problems. 
We let $n=p=16$. 

\subsubsection{Cube functions}
Given a set of weights $\{\alpha_i\}_{i=1}^p$, we define $f(\xb)$ componentwise by
\begin{equation}
\label{eq:cube}
F_i(\xb) = \left\{
\begin{array}{ll}
(\alpha_1(x_1-1))^2 & \text{ if } i=1\\
\left(
\alpha_i(x_i-x_{i-1}^3)\right)^2 & \text{ if } i\geq 2.\\
\end{array}
\right. 
\end{equation}
The function $f(\xb)$ defined via \cref{eq:cube} satisfies $f(\xb^*)=0$ at the unique point $\xb^*=\oneb_n$.
We also use this function to test models of the form \cref{eq:zero_gauss_newton}. 
The gradient of $\nabla f_i$ is given coordinatewise as
\begin{equation*}
\left[ \nabla f_i(\xb)\right]_j = \left\{
\begin{array}{ll}
\alpha_i & \text{ if } i=j\\
-3\alpha_i\xb_j^2 & \text{ if } i = j + 1\\
0 & \text { otherwise. } 
\end{array}
\right. 
\end{equation*}
Thus, for $i=1$, $L_i=0$, and for all $i\geq 2$, $L_i = 30\alpha_i$ if we assume $\{\xb^k\}$ remains bounded in $[-1,1]^n$. 
We employ the same three modes of generation (imbalanced, progressive, and balanced) as in \Cref{sec:genrosenbr} and generate random problem instances in the same way. We once again let $n=p=16$. 

\subsection{Tested variants of \Cref{alg:dfotr}}
We generated several variants of \Cref{alg:dfotr}.
When we assume access to full gradient information (i.e., when we work with logistic loss functions in these experiments), we refer to the corresponding variant of our method as \texttt{SAM-FO} (stochastic average models -- first order) and use the models \cref{eq:first_order}. 
When we do not assume access to gradient information and work with least-squares minimization (i.e., when we work with generalized Rosenbrock and cube functions in these experiments), we extend \texttt{POUNDERS} and refer to the corresponding variant of our method as \texttt{SAM-POUNDERS}, using the models \cref{eq:zero_gauss_newton}. 

For both variants, \texttt{SAM-FO} and \texttt{SAM-POUNDERS}, we split each into two modes of generating $I_k$ and $J_k$ in each iteration. 
For the \emph{uniform} version of a variant, when computing the subset $I_k$ or $J_k$, we take a given computational resource size $r$ and generate a uniform random sample without replacement of size $r$ from $\{1,\dots,p\}$ .
For the \emph{dynamic} version of a variant, when computing the subset $I_k$ or $J_k$, we implement \Cref{alg:dynamic_batchsize}.
The intention of implementing uniform variants is to demonstrate that sampling according to our prescribed distributions is empirically superior to uniform random sampling, which is, arguably, the first naive mode of sampling someone might try.

For all variants of our method, we set the trust-region parameters of Ccref{alg:dfotr} to fairly common settings, that is, 
$\Delta_0=1$, $\Delta_{\max}=1000$, $\gamma=2$, and $\eta_1=0.1$. 
Although not theoretically supported, we set $\eta_2=\infty$, so that step acceptance in \Cref{alg:dfotr} is effectively  based only on the ratio test defined by $\eta_1$. 
For the parameters of \Cref{alg:dynamic_batchsize} in the dynamic variant of our method, we chose $\pi = 0.99$ and  $C=\sum_{i} L_i$. 
For tests of \texttt{SAM-POUNDERS}, all model-building subroutines and default parameters specific to those subroutines were lifted directly from \texttt{POUNDERS}.
We particularly note that, as an internal algorithmic parameter in \texttt{POUNDERS}, when computing \cref{eq:global1_zogn} and \cref{eq:global2_zogn} for use in \Cref{alg:dynamic_batchsize}, we use the upper bound $\|V_{Y_i}^{-1}\|\leq \min\{\sqrt{n}, 10\}$.  

\subsection{Visualizing the method}
Focusing on logistic problems, we demonstrate in \Cref{fig:visualize_fo} and \Cref{fig:visualize_pounders} a single run of \Cref{alg:dfotr} with dynamic batch sizes and computational resource size $r = 1$.

\begin{figure}[h!]
\centering
\includegraphics[width=.49\textwidth]{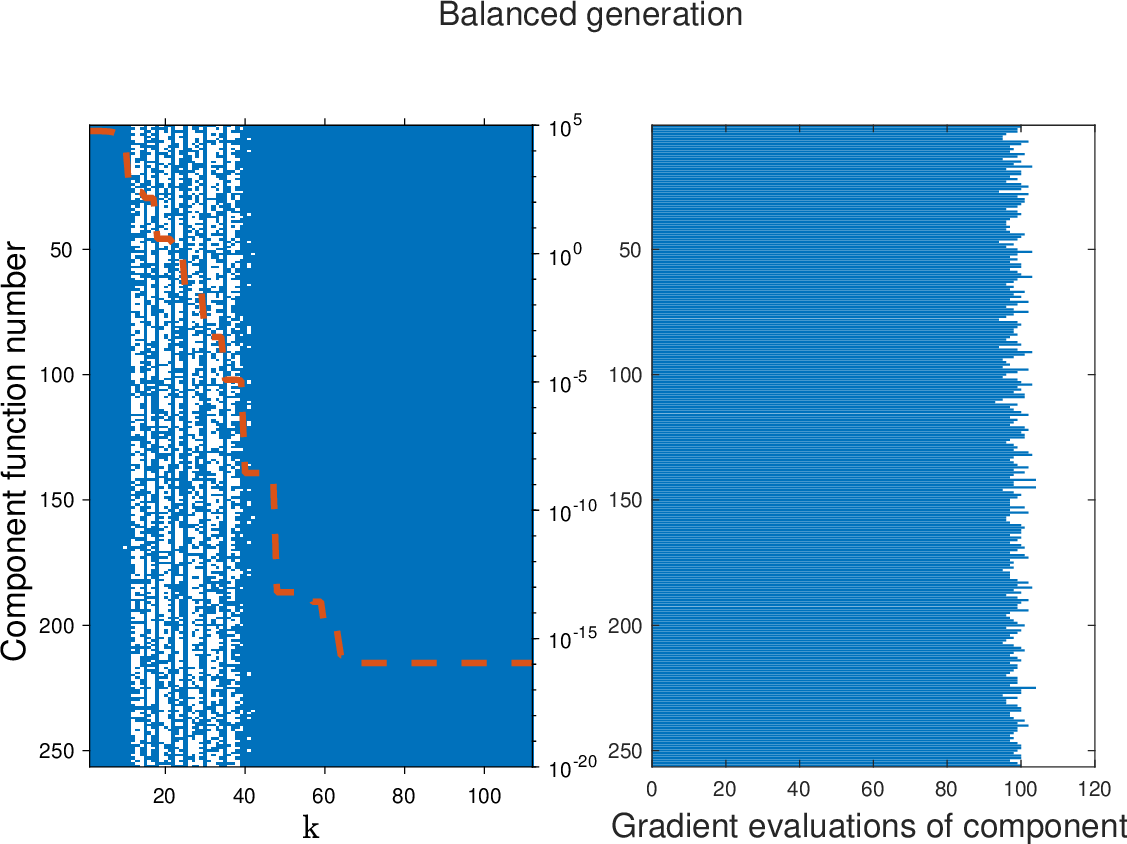} \includegraphics[width=.49\textwidth]{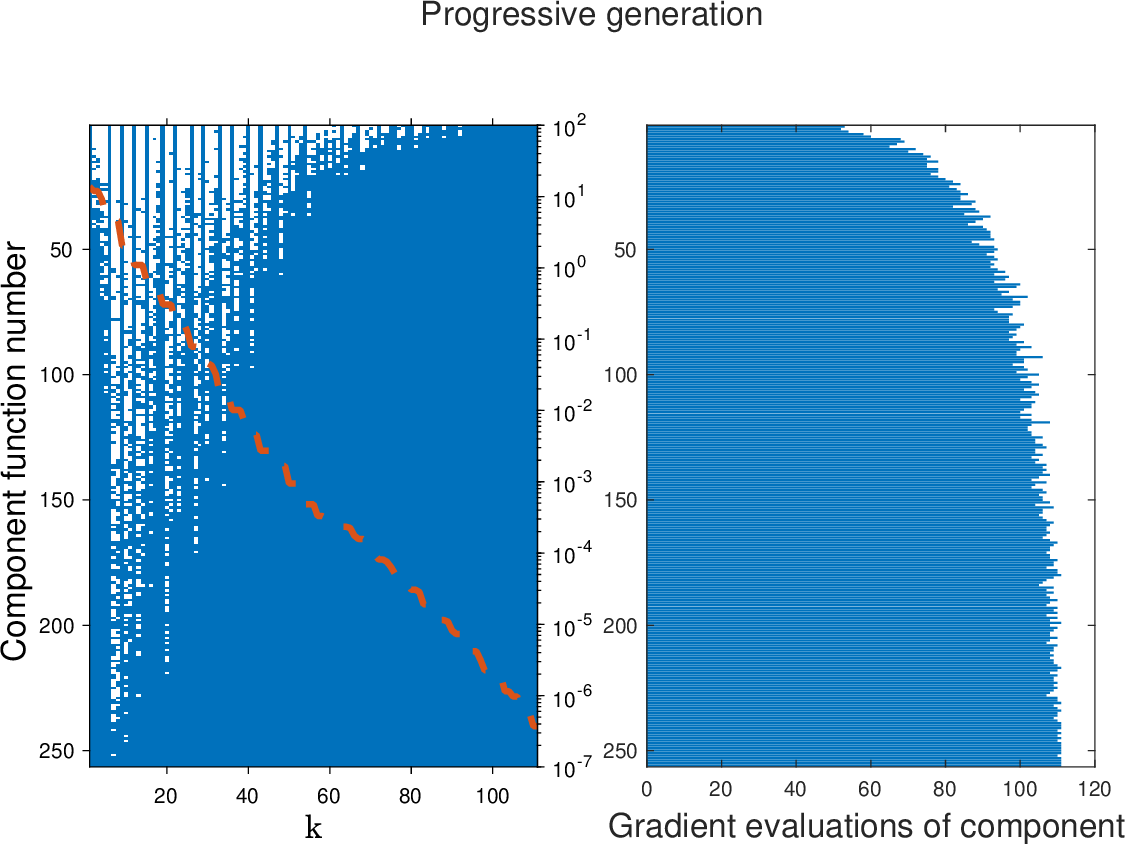}

\includegraphics[width=.49\textwidth]{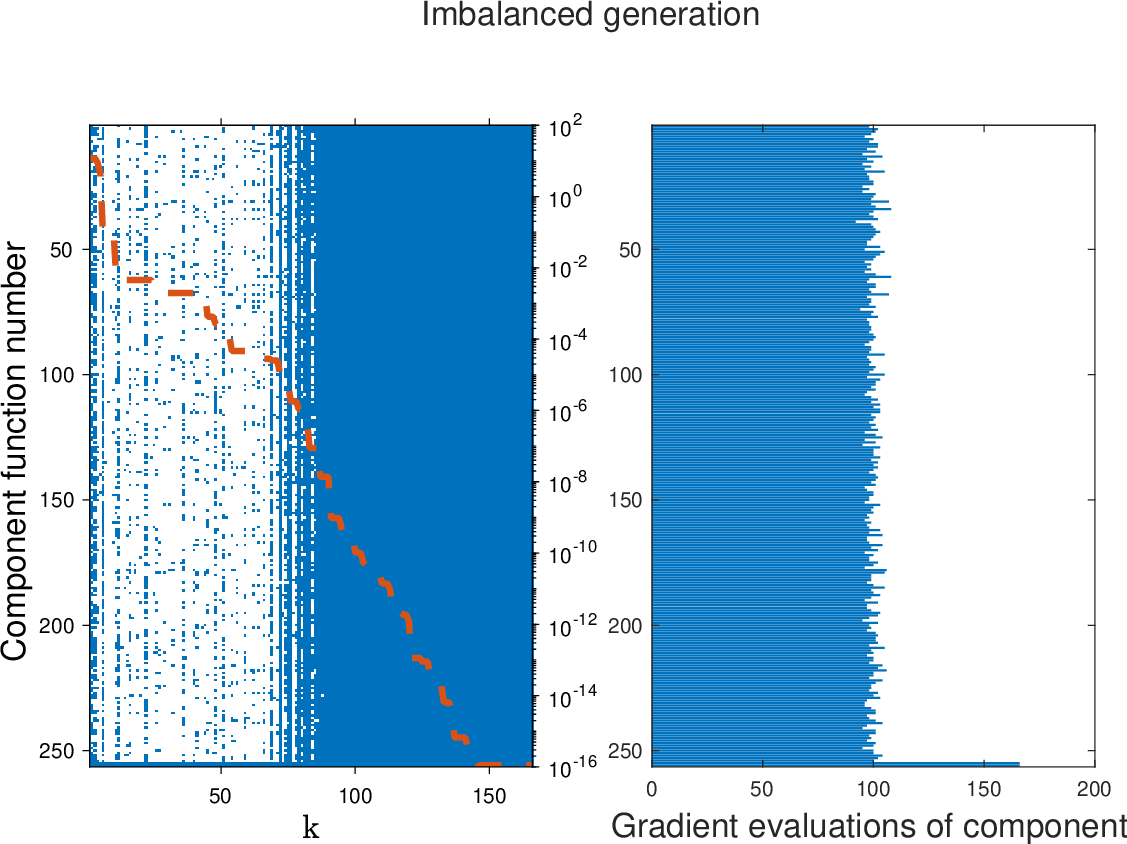}
\caption{Statistics of a single run of \Cref{alg:dfotr} with first-order models \cref{eq:first_order} for each of the three different modes of problem data generation for logistic loss functions. 
In each of the three pairs of figures, the left figure juxtaposes the optimality gap $f(x^k)-f(x^*)$ on top of the sparsity pattern of the evaluations $(F_i(x^k),\nabla F_i(x^k))$ performed in the $k$th point queried by the algorithm.
The histogram in the right figure of each pair illustrates a sum of the corresponding sparsity pattern, namely, the total number of ($F_i(x),\nabla F_i(x)$) evaluations performed. 
\label{fig:visualize_fo}}
\end{figure}

In \Cref{fig:visualize_fo} we illustrate the first-order method on a logistic loss function defined by a single realization of problem data $\{(\ab_{x,i}, \ab_{y,i})\}_{i=1}^p$ under each of the three modes of generation. 
We note in \Cref{fig:visualize_fo} that, as should be expected by our error bounds \cref{eq:first_order} (see \cref{eq:global1_fo}, \cref{eq:global2_fo}), the distribution of the sampled function/gradient evaluations $(F_i(\xb^k),\nabla F_i(\xb^k))$ appears to be proportional to the distribution of the Lipschitz constants $L_i$. 
Moreover, we notice that in the balanced setting, the method tends to sample fairly densely on most iterations, whereas in the progressive and imbalanced settings, the method is remarkably sparser in sampling. 

\begin{figure}[h!]
\centering
\includegraphics[width=.49\textwidth]{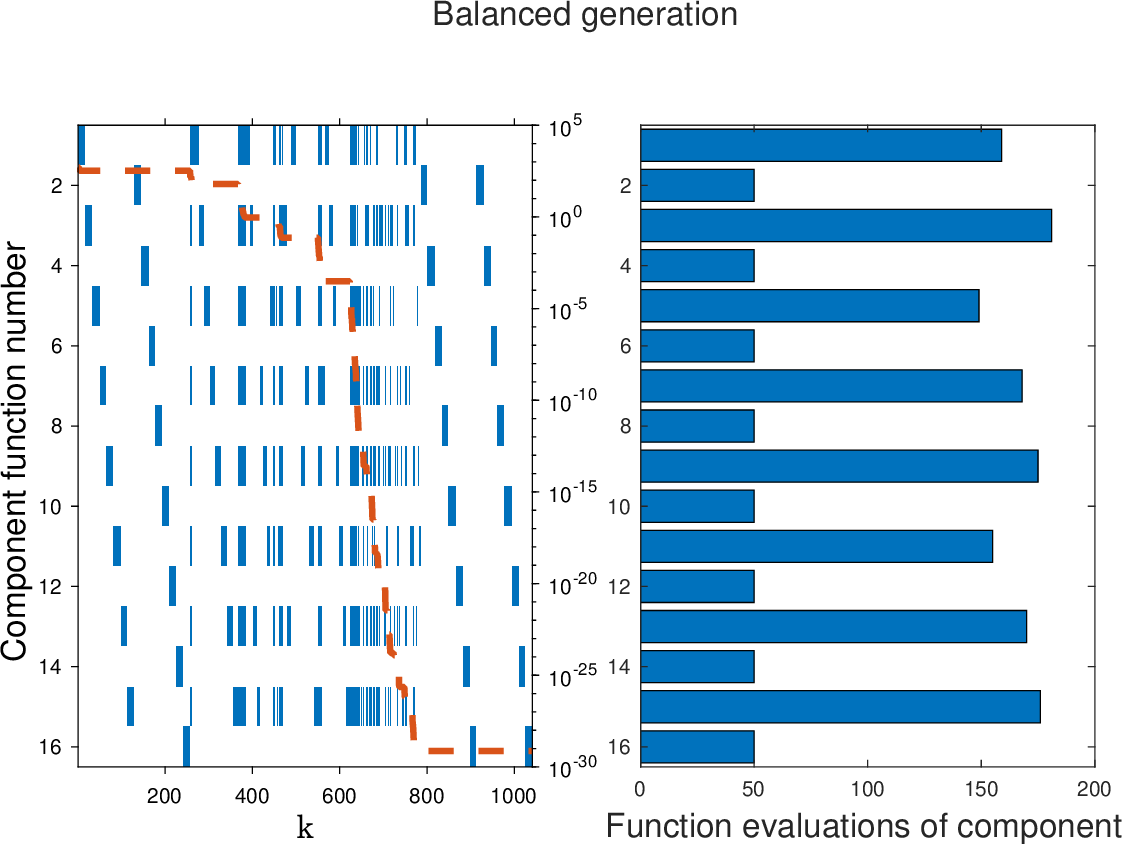} \includegraphics[width=.49\textwidth]{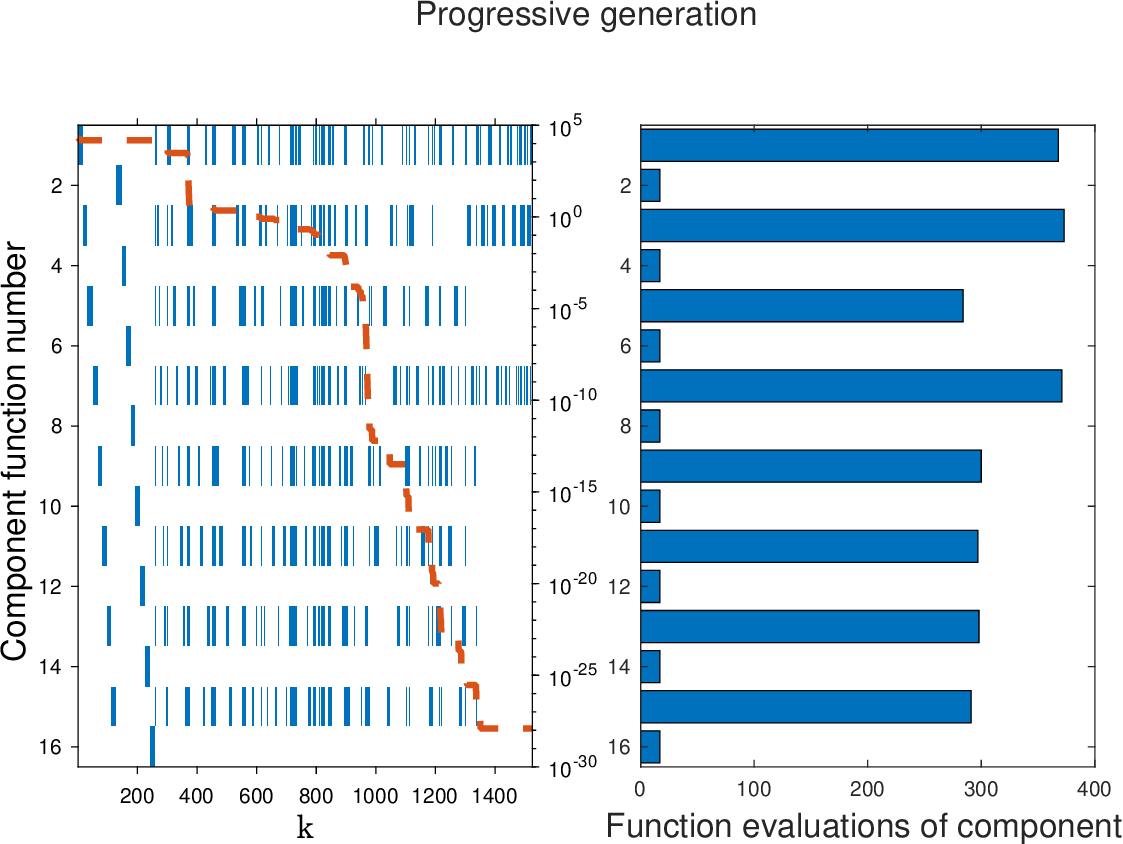}

\includegraphics[width=.49\textwidth]{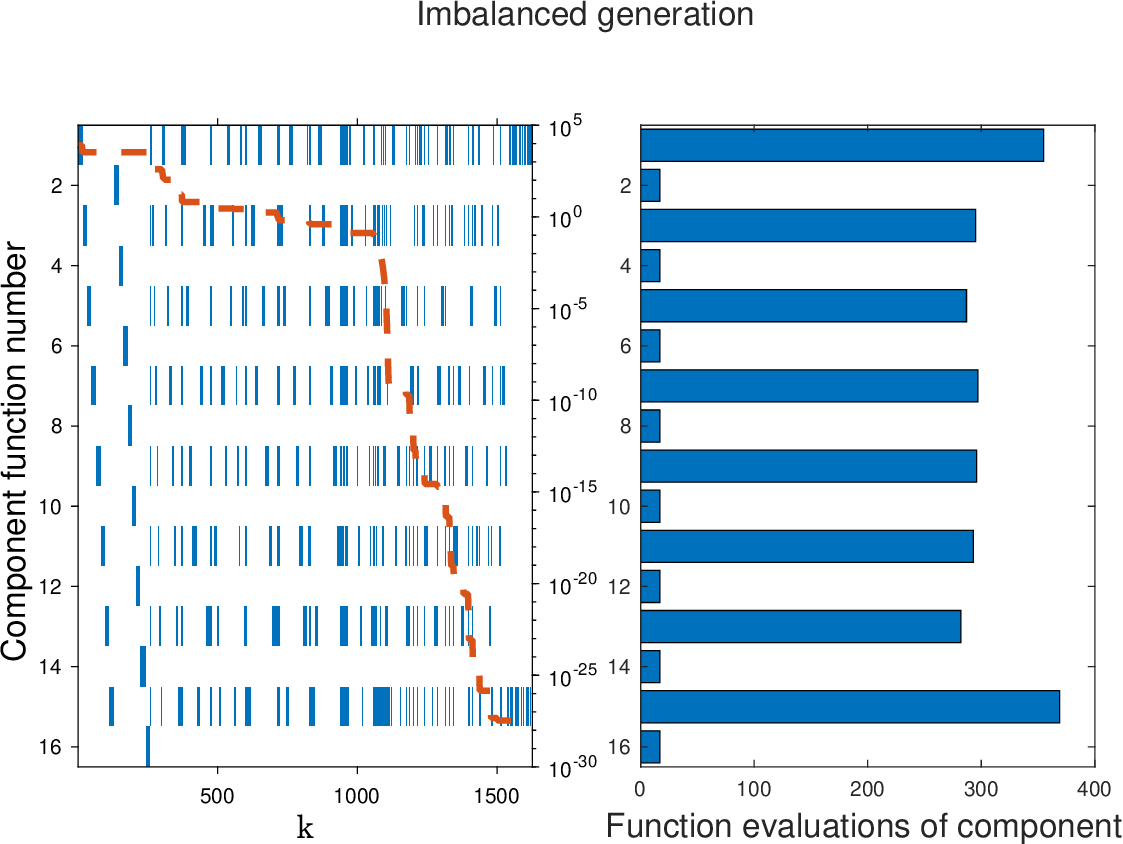}
\caption{Statistics of a single run of \Cref{alg:dfotr} using \texttt{POUNDERS} routines for model building for each of the three different modes of problem data generation for the generalized Rosenbrock function.
The interpretation of the plots is the same as in \Cref{fig:visualize_fo} except that we now  perform only function evaluations (as opposed to gradient evaluations) at a queried point $x^k$. 
\label{fig:visualize_pounders}}
\end{figure}

In \Cref{fig:visualize_pounders}, we illustrate the \texttt{POUNDERS} extension on the generalized Rosenbrock function with parameters $\alpha_i$ defined by each of the three modes of generation. 
As expected from the error bounds for \cref{eq:zero_gauss_newton} (that is, \cref{eq:global1_zogn} and \cref{eq:global2_zogn}), we see that the even-numbered component functions with $L_i=0$ are  sampled only at the beginning of the algorithm (to construct an initial model) and sometimes at the end of the algorithm (due to criticality checks performed by \texttt{POUNDERS}).
However, likely because  of the additional multiplicative presence of $|f_i(\cb_i^k)|$ in the error bounds, it is not the case---as in \Cref{fig:visualize_fo}---that the remaining sampling of the odd-numbered component functions is sampled proportionally to their corresponding Lipschitz constants $L_i$. 

\subsection{Comparing \texttt{SAM-FO} with \texttt{SAG}}\label{sec:sag_experiments}
Because \texttt{SAM-FO} and \texttt{SAG} employ essentially the same average model, it is worth beginning our experiments with a quick comparison of the two methods. 
Because \texttt{SAM-FO} has globalization via a trust region and can employ knowledge of Lipschitz constants of $L_i$, the most appropriate comparison is with the method referred to as \texttt{SAG-LS (Lipschitz)} in \cite{Schmidt2013}, which employs a Lipschitz line search for globalization and can employ knowledge of $L_i$. 
We used the implementation of \texttt{SAG-LS (Lipschitz)} associated with \cite{Schmidt2013}.\footnote{Code taken from \url{https://www.cs.ubc.ca/~schmidtm/Software/SAG.html}} 
Because \texttt{SAG-LS (Lipschitz)} effectively only updates one model at a time, we choose to compare \texttt{SAG-LS (Lipschitz)} only with \texttt{SAM-FO} with a computational resource size $r=1$ and the dynamic mode of generating $I_k,J_k$ (since the uniform mode disregards $L_i$).  

Results are shown in \Cref{fig:sag_experiments}. 
Throughout these, we use the term \emph{effective data passes} to refer to the number of component function evaluations performed, divided by $p$, the total number of component functions. 
With this convention, a deterministic method that evaluates all $p$ component function evaluations in every iteration performs exactly one effective data pass per iteration.
Although effective data passes are certainly related to the notion of an epoch in machine learning literature, they differ in that an epoch typically involves some shuffling so that all data points (or, in our setting, component function evaluations) are touched once per effective data pass.
This notion of equal touching is not applicable to our randomized methods. 

\begin{figure}[h!]
\centering
\includegraphics[width=.99\textwidth]{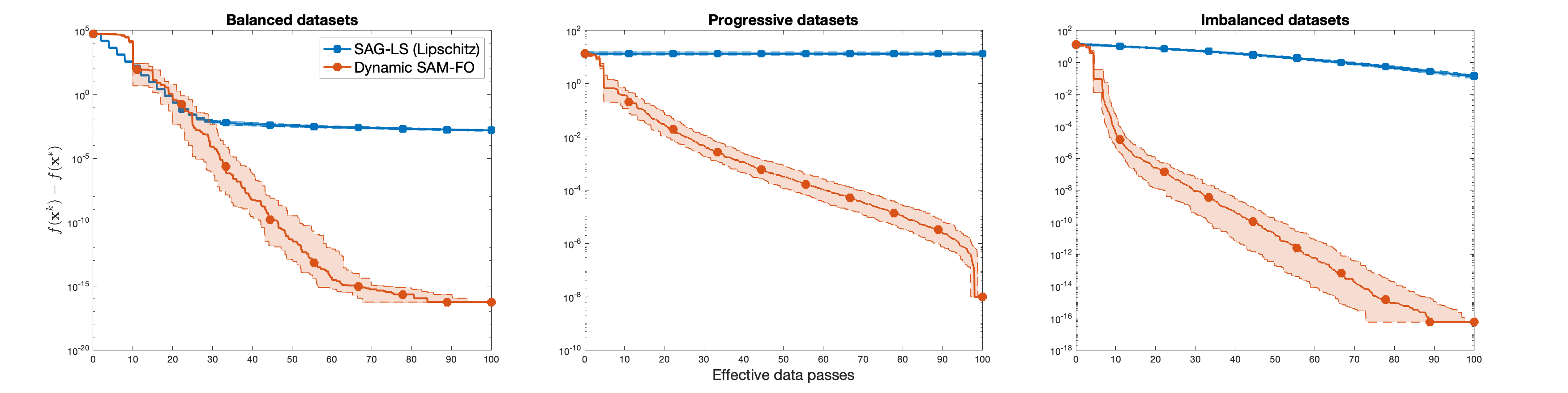}
\caption{\label{fig:sag_experiments} Comparing \texttt{SAG-LS (Lipschitz)} with \texttt{SAM-FO} with dynamic batch sizes on logistic loss problems with \textbf{left)} balanced data generation, 
\textbf{center)} progressive data generation, and \textbf{right)} imbalanced data generation. Solid lines and markers denote median performance across the 90 problems (30 random datasets $\times$ 3 random seeds per dataset), while the outer bands denote $25^{th}--75^{th}$ percentile performance. We note that on the $x$-axis, $f(\xb^k)-f(\xb^*)$ is an appropriate metric because these logistic loss test problems are strongly convex. }
\end{figure}

We note in \Cref{fig:sag_experiments} that while \texttt{SAM-FO} clearly outperforms the out-of-the-box version of \texttt{SAG-LS (Lipschitz)} in these experiments when measured in component function evaluations, this does \emph{not} suggest that \texttt{SAM-FO} would be a preferable method to \texttt{SAG-LS (Lipschitz)} in supervised machine learning (finite-sum minimization) problems. 
Clearly, \texttt{SAM-FO} involves nontrivial computational and storage overhead in maintaining separate models and computing error bounds; and when the number of examples in a dataset is huge (as is often the case in machine learning settings), this overhead might become prohibitive.
Thus, although we can demonstrate that for problems with hundreds (or perhaps thousands) of examples, \texttt{SAM-FO} is the preferable method, we do not want the reader to extrapolate to huge-scale machine learning problems.

\subsection{Comparing uniform SAM with dynamic SAM}\label{sec:uni_vs_dyn}
We now compare both variants of SAM with themselves when generating batches of a fixed computational resource size uniformly at random versus when generating batches according to our suggested \Cref{alg:dynamic_batchsize}.

\subsubsection{\texttt{SAM-FO} on logistic loss problems}
We first illustrate the performance of \texttt{SAM-FO} under these two randomized batch selection schemes on the logistic loss problems, in other words, the same computational setup as in \Cref{sec:sag_experiments}. Results are shown in \Cref{fig:logistic_compare}.

\begin{figure}
 \centering
 \includegraphics[width=.99\textwidth]{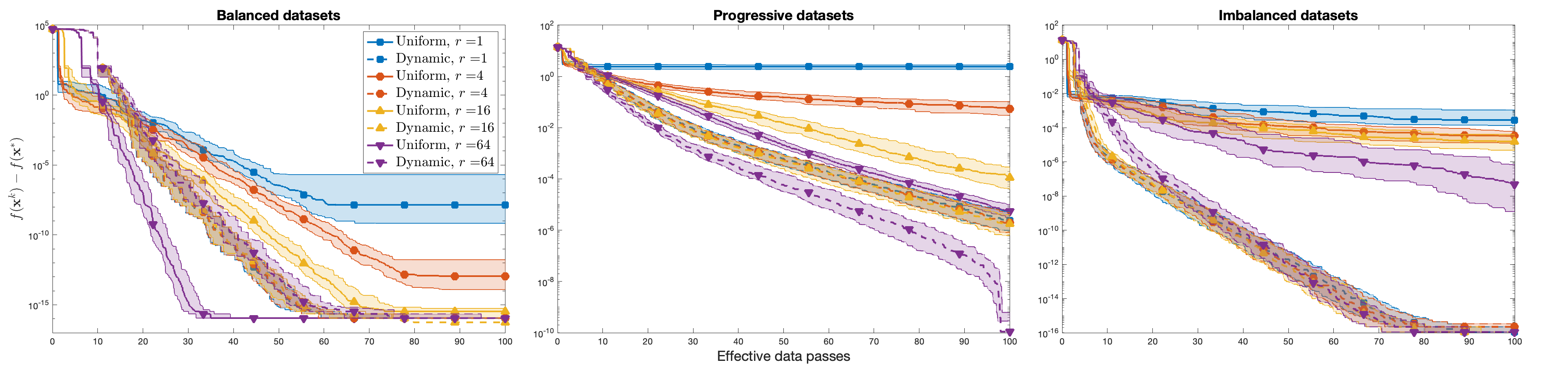}
 \caption{\label{fig:logistic_compare} Comparing the performance of \texttt{SAM-FO} with itself when using uniform generation of batches of a fixed resource-size $r$ versus generating batches according to \Cref{alg:dynamic_batchsize} with parameter $r$. We show results using the same percentile bands as in \Cref{fig:sag_experiments} and separate results by the mode of generating the dataset (balanced, progressive, or imbalanced Lipschitz constants).}
\end{figure}

For the runs using balanced data in \Cref{fig:logistic_compare}, we observe that---perhaps unsurprisingly---for larger values of computational resource size $r$, uniform sampling is marginally better than dynamic sampling. This phenomenon might be explained by the fact that, with all Lipschitz constants roughly the same, the probabilities assigned by \Cref{alg:dynamic_batchsize} are more influenced by the distance between $x^k$ and $y_i^k$ than anything else, and so ``on average'' the updates are nearly cyclic. 
For the progressive datasets in the same figure, we see a more obvious preference for the dynamic variant of \texttt{SAM-FO} across values of $r$.
For the imbalanced datasets in the same figure, we still see the same preference, but we notice that the dynamic variant of \texttt{SAM-FO} hardly loses any performance between $r=1$ and $r=128$; this was  expected because as long as a batch includes the component function with the large Lipschitz constant ``on most iterations,'' then the method should be relatively unaffected by the computational resource size $r$. 

In \Cref{fig:det_compare_logistic}, we display less traditional plots, which we now explain. 
We introduce the notion of \emph{machine size}, which we define as the number of component function evaluations that can be made embarrassingly parallel on a theoretical hardware architecture. We make the simplifying assumption that each component function evaluation requires an equal amount of computational resources to perform, which is approximately correct for the test problems in this paper. 
With the notion of machine size, we can then define the number of \emph{rounds} required by a dynamic SAM variant with resource size parameter $r$ to solve a problem $\pi$ to a defined level of tolerance $\tau$, when run on a machine of a given machine size $\mu$; that is,
$$\begin{array}{l}R_{r,\pi,\tau,\mu} = \\
\left(\# \text{batches of size $r$ required to satisfy $f(\xb^k)-f(\xb^*)\leq \tau$ on problem } \pi\right) \left\lceil\displaystyle\frac{r}{\mu}\right\rceil.
\end{array}$$
The number of rounds $R_{r,\pi,\tau,\mu}$ is an idealized estimation of wall-clock time for estimating scalability. 
For example, for the logistic loss problem with $p=256$, then 256 function evaluations could be done in the same amount of time required by a single-component function evaluation given a machine size of $\mu=256$. 
On the other extreme, if $\mu=1$, then 256 function evaluations would require the amount of time required by 256 single-component function evaluations (that is, they would have to be executed serially). 

\begin{figure}[h!]
\centering
 \includegraphics[width=.99\textwidth]{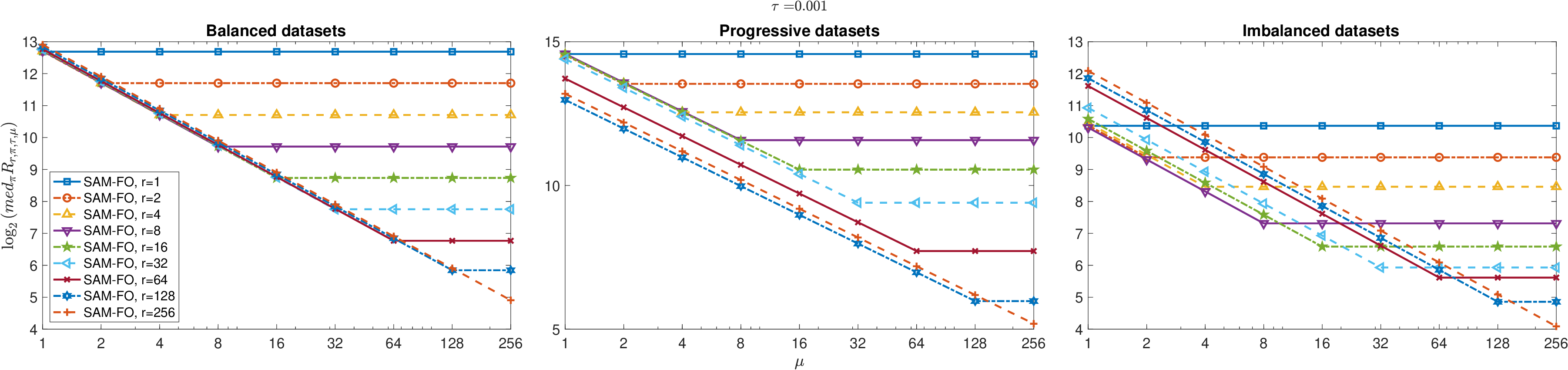}
 
  \includegraphics[width=.99\textwidth]{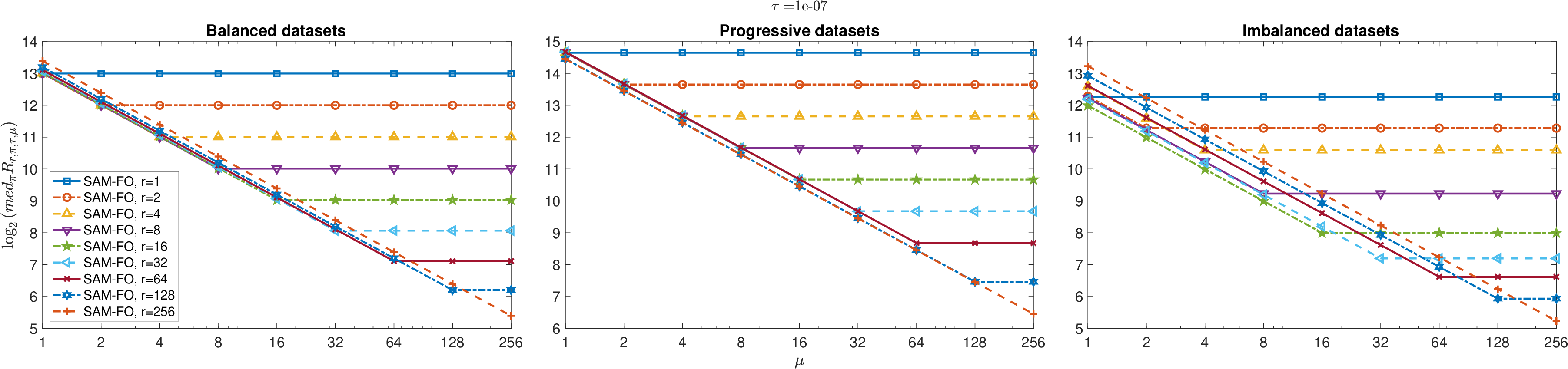}
\caption{\label{fig:det_compare_logistic} For each mode of generating random tested logistic loss problems, we show the median, over the problems $\pi$, of $\log_2(R_{r,\pi,\tau,\mu})$. The top row displays results for a convergence tolerance $\tau=10^{-3}$, and the bottom row displays results for the tighter convergence tolerance $\tau=10^{-7}$. }
\end{figure}

We remark in \Cref{fig:det_compare_logistic} that when using balanced datasets, for all tested values of $r$, \texttt{SAM-FO}-$r$ requires roughly $90\%$ of the computational resource use of the deterministic method in its median performance whenever $r\geq \mu$ for the tighter tolerance $\tau=10^{-7}$. 
More starkly, when using imbalanced datasets, \texttt{SAM-FO} uses between $40\%$ and $80\%$ of the computational resources required by the deterministic method when $\mu\leq r \leq128$.
Perhaps as expected, results are less satisfactory for the progressive datasets, which are arguably the hardest of these problems. 
Even for the progressive datasets, however, the regret is not unbearably high in the tighter convergence tolerance, with \texttt{SAM-FO}-$r$ requiring $130\%$ of the computational resources required by the deterministic method when $\mu \leq r \leq 32$, and the randomized and deterministic method roughly breaking even for $\mu \leq r\in\{64,128\}$. 

\subsubsection{\texttt{SAM-POUNDERS} on Rosenbrock problems}

We now illustrate the performance of \texttt{SAM-POUNDERS} under the two randomized batch selection schemes on the Rosenbrock problems.
Results are shown in \Cref{fig:rosenbrock_compare}. 

\begin{figure}
 \centering
 \includegraphics[width=.99\textwidth]{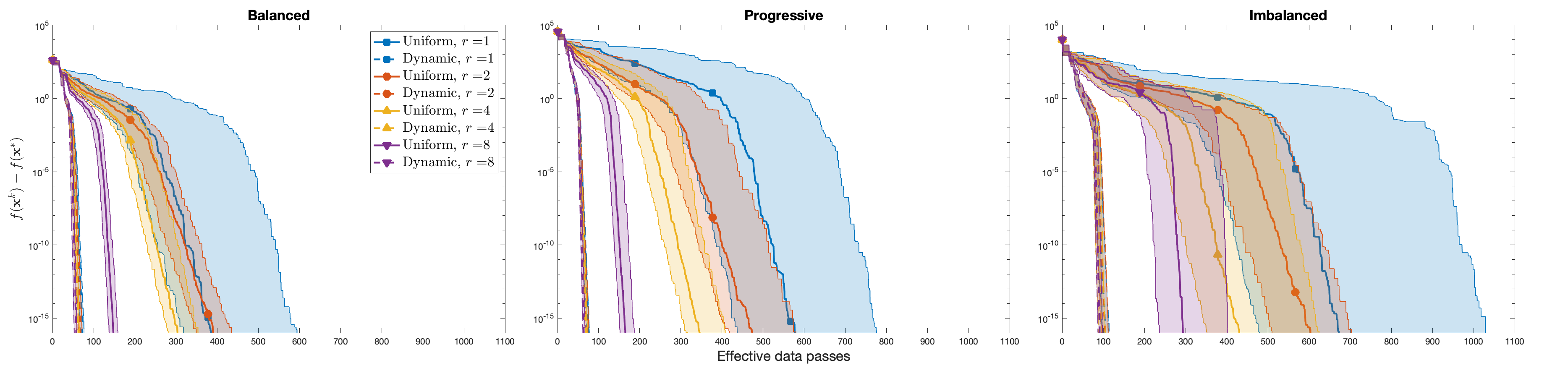}
 \caption{\label{fig:rosenbrock_compare} Comparing the performance of \texttt{SAM-POUNDERS} with itself when using uniform generation of batches of a fixed resource  size $r$ versus generating batches according to \Cref{alg:dynamic_batchsize} with the Rosenbrock problems. We verified that no run converged to a local minimum, and hence the optimality gap on the $y$-axis is appropriate. }
\end{figure}

We remark in \Cref{fig:rosenbrock_compare} that there is a generally clear preference for employing the dynamic batch selection method over simple uniform batch selection. 

We again employ the same comparisons of \texttt{SAM-POUNDERS}-$r$ to
\texttt{POUNDERS} as in \Cref{fig:det_compare_logistic} in \Cref{fig:det_compare_rosenbrock}.
\begin{figure}[h!]
\centering
 \includegraphics[width=.99\textwidth]{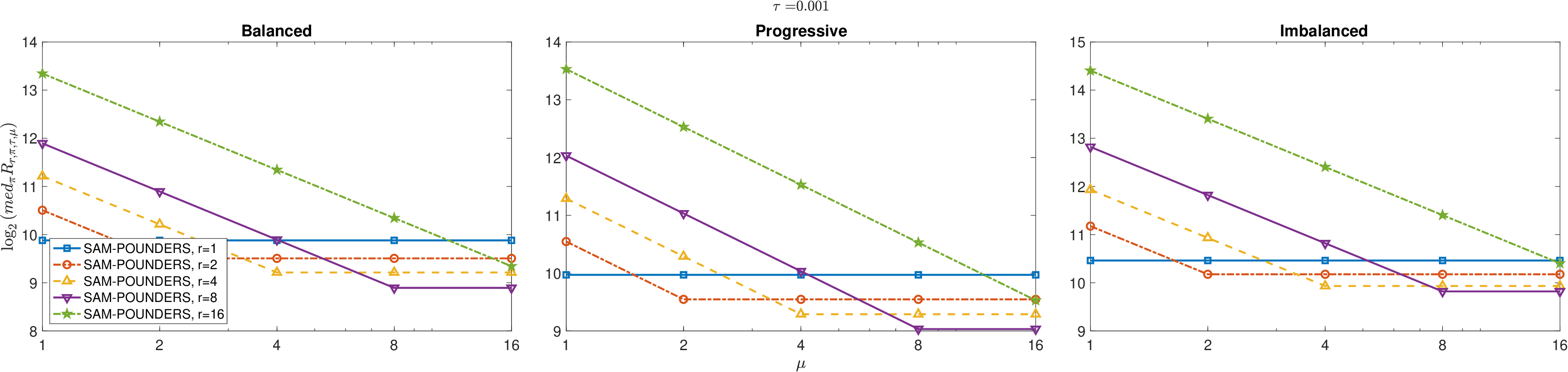}
 
  \includegraphics[width=.99\textwidth]{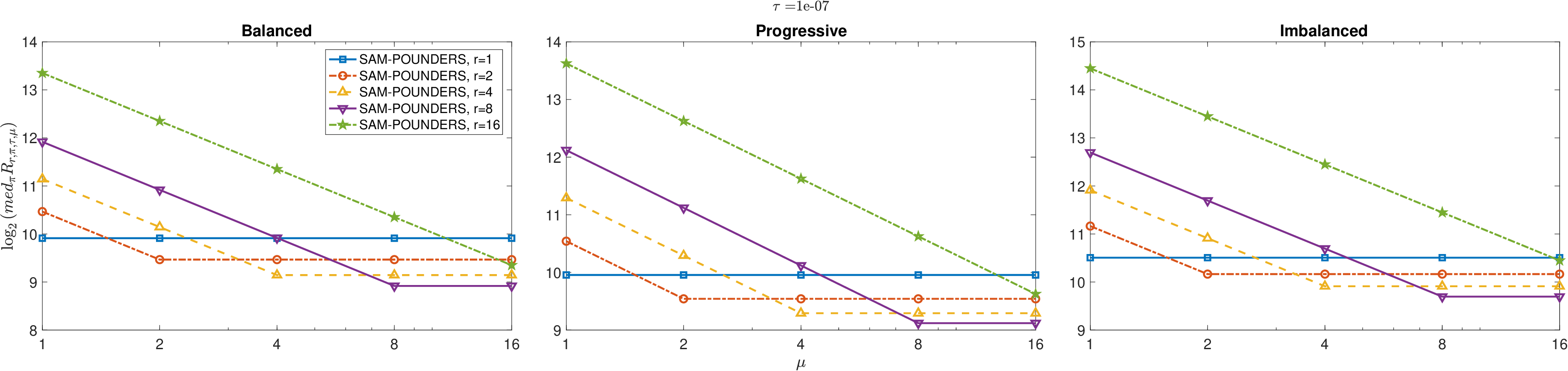}
\caption{\label{fig:det_compare_rosenbrock} For each mode of generating generalized Rosenbrock problems, we show the median, over the problems $\pi$, of $\log_2(R_{r,\pi,\tau,\mu})$. The top row displays results for a convergence tolerance $\tau=10^{-3}$, and the bottom row displays results for the tighter convergence tolerance $\tau=10^{-7}$.}
\end{figure}
We remark that the results shown in \Cref{fig:det_compare_rosenbrock} are  satisfactory, with all problems and values of $r$ giving a strict improvement in computational resource use to attain either $\tau=10^{-1}$-optimality or $\tau=10^{-7}$-optimality over the deterministic method. 

\subsubsection{\texttt{SAM-POUNDERS} on cube problems}

We illustrate the performance of \\
\texttt{SAM-POUNDERS} under the two randomized batch selection schemes on the cube problems.
Results are shown in \Cref{fig:cube_compare}. 

\begin{figure}
 \centering
 \includegraphics[width=.99\textwidth]{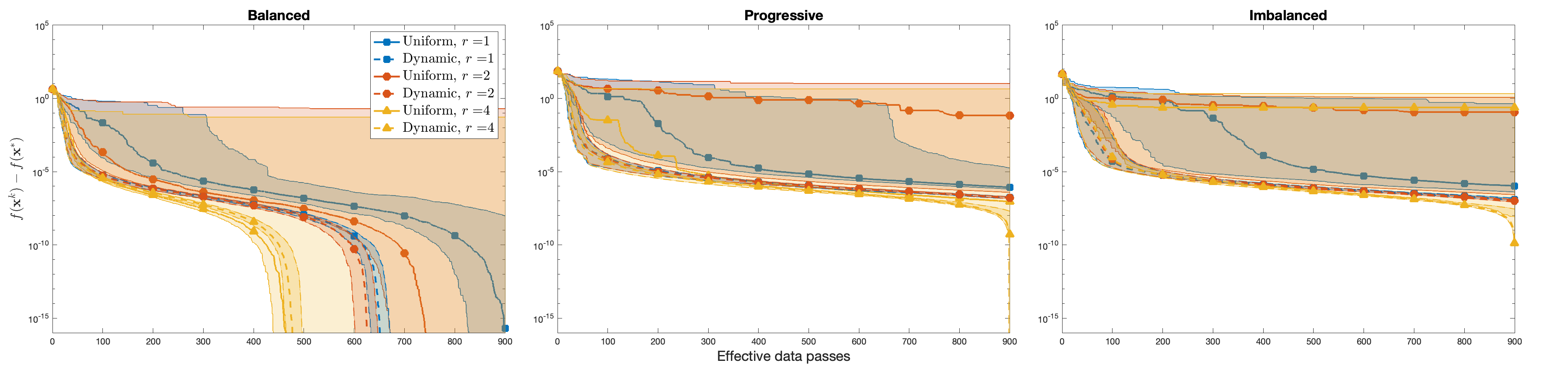}
 \caption{\label{fig:cube_compare} Comparing the performance of \texttt{SAM-POUNDERS} with itself when using uniform generation of batches of a fixed resource-size $r$ versus generating batches according to \Cref{alg:dynamic_batchsize} with the cube problems. We verified that no run converged to a local minimum, and hence the optimality gap on the $y$-axis is appropriate. }
\end{figure}

As in the Rosenbrock tests, we find in \Cref{fig:cube_compare} a preference for employing the dynamic batch selection method over simple uniform batch selection on the cube problems. 

\begin{figure}[h!]
\centering
 \includegraphics[width=.99\textwidth]{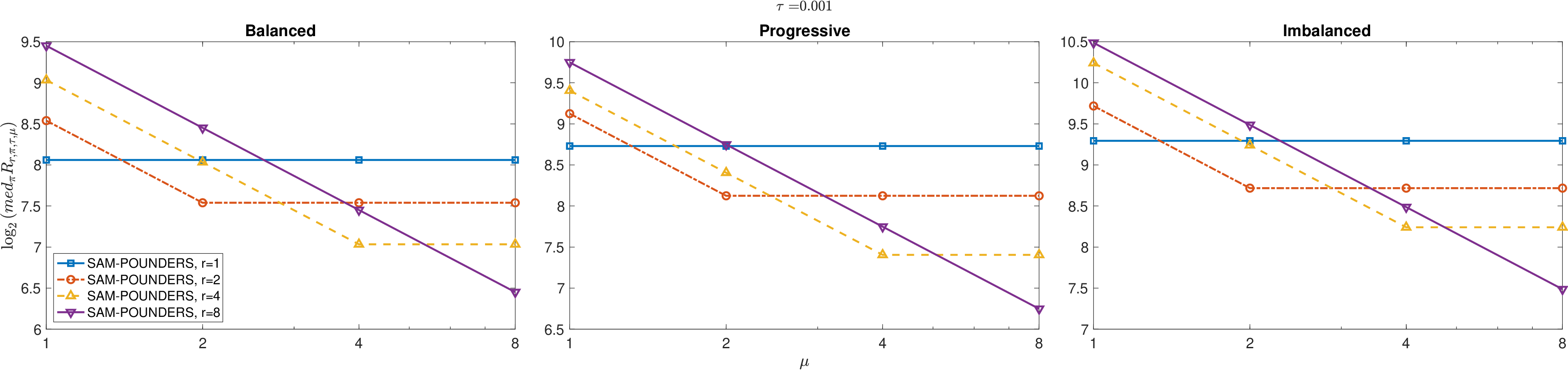}
 
  \includegraphics[width=.99\textwidth]{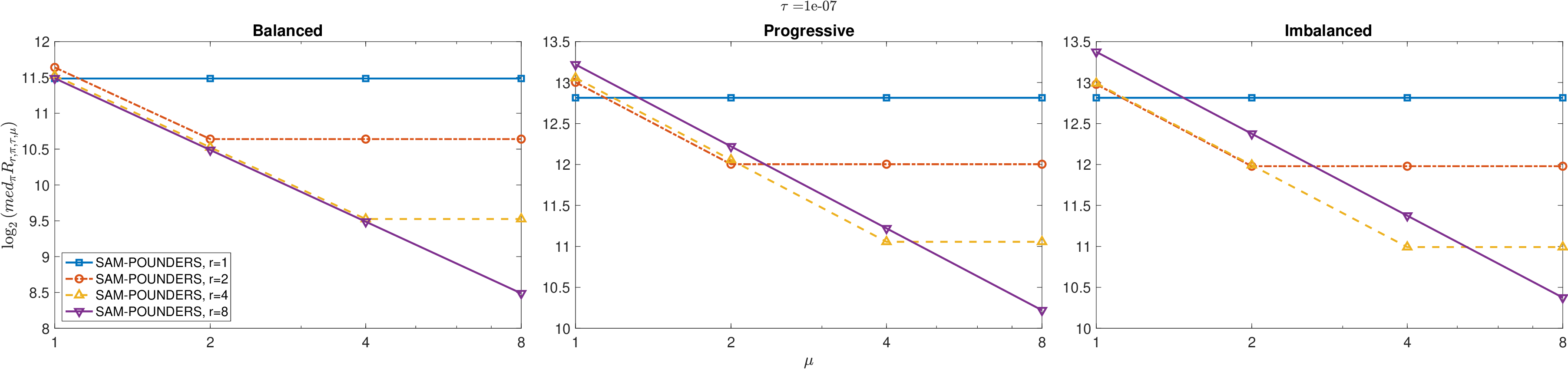}
\caption{\label{fig:det_compare_cube} For each mode of generating cube problems, we show the median, over the problems $\pi$, of $\log_2(R_{r,\pi,\tau,\mu})$. The top row displays results for a convergence tolerance $\tau=10^{-3}$, and the bottom row displays results for the tighter convergence tolerance $\tau=10^{-7}$.}
\end{figure}

In \Cref{fig:det_compare_cube}, we see that compared with the deterministic method, 
for the weaker convergence tolerance ($\tau=10^{-3}$) the dynamic variants of \texttt{SAM-POUNDERS}-$r$ yield better median performance than deterministic \texttt{POUNDERS}. 
However, the situation is  more mixed in the tighter convergence tolerance ($\tau=10^{-7}$).
While \texttt{SAM-POUNDERS}-$r$ exhibits better median performance than does deterministic \texttt{POUNDERS} for all $r$ in the imbalanced generation setting, the same is only true for $r=1$ in the balanced case and $r=1,2$ in the progressive generation setting. 
As in the logistic loss experiments, however, we see that even in the situations where \texttt{SAM-POUNDERS}-$r$ does not outperform the deterministic method, it does not lose by a significant amount, leading to low regret. 

\subsection{Comparison of \texttt{SAM-POUNDERS} with \texttt{POUNDERS}}\label{sec:pounders_compare}
We now illustrate, using the same test problems as in \Cref{sec:uni_vs_dyn}, the comparative performance of \texttt{SAM-POUNDERS} against \texttt{POUNDERS}, the deterministic method on which \texttt{SAM-POUNDERS} was built. 
In theory, \texttt{POUNDERS} could be described as a special case of \texttt{SAM-POUNDERS} that employs a uniform batch of size $p$ in every function evaluation.
However, due to particular considerations in our implementation of \texttt{SAM-POUNDERS} including the parameter $\eta_2$ in \Cref{alg:dfotr}, this recovery shouldn't be expected to happen in practice. 
Thus, in these tests we directly compare \texttt{POUNDERS}\footnote{The current version of \texttt{POUNDERS} is actively maintained at \url{https://github.com/POptUS/IBCDFO}.} with our novel implementation of \texttt{SAM-POUNDERS} with a dynamic batch generation and assumed computational resource size $r=1$.
In \Cref{fig:pounders_rosenbrock_16}, we illustrate such results on $n=p=16$-dimensional Rosenbrock functions with the same modes of generation (balanced, progressive, imbalanced) as described in \Cref{sec:genrosenbr}.
In \Cref{fig:pounders_cube_16}, we illustrate such results on $n=p=16$-dimensional cube functions, again with the same three modes of generation. 
In \Cref{sec:add_num}, we show results for the same experimental setup, but with $n=p=64$. 
In all cases, we see general benefits in terms of the number of component function evaluations by using the dynamic batch generation of \texttt{SAM-POUNDERS}.

\begin{figure}   
    \includegraphics[width=.99\textwidth]{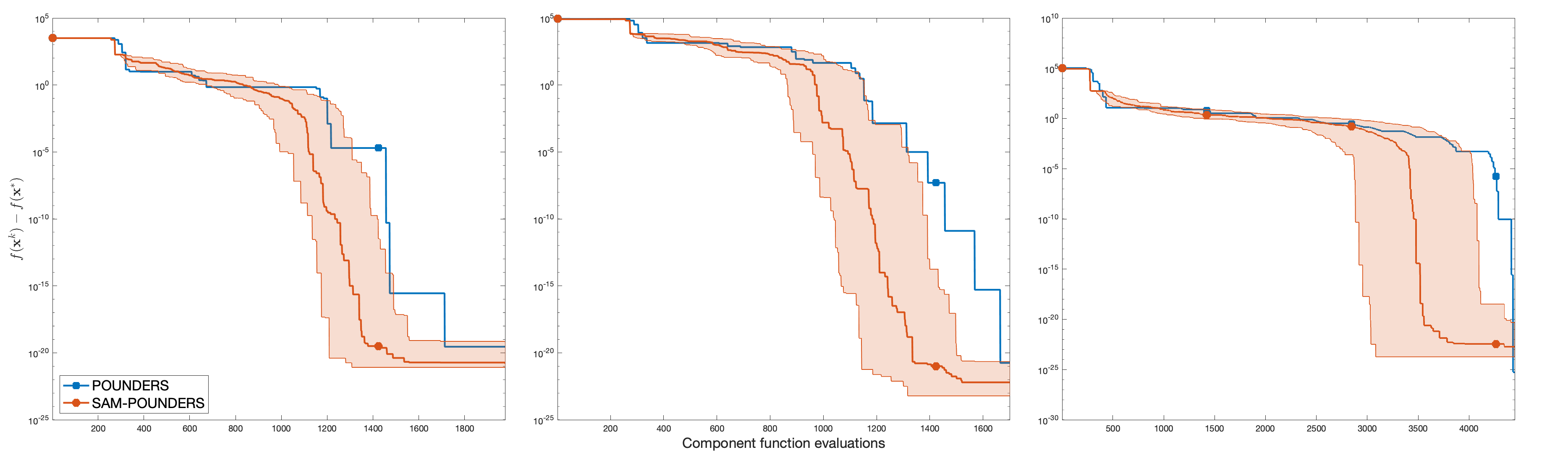}
    \caption{\label{fig:pounders_rosenbrock_16} Comparison of \texttt{POUNDERS} and \texttt{SAM-POUNDERS} on $n=p=16$-dimensional Rosenbrock problems with balanced (left), progressive (center), and imbalanced (right) modes of generation. Both solvers were given a common starting point, and \texttt{SAM-POUNDERS} was run with 30 different random seeds. Median performance is illustrated in the solid line of \texttt{SAM-POUNDERS}, with $10-90\%$ percentile bands illustrated in the transparent filled region.}
\end{figure}

\begin{figure}
    \includegraphics[width=.99\textwidth]{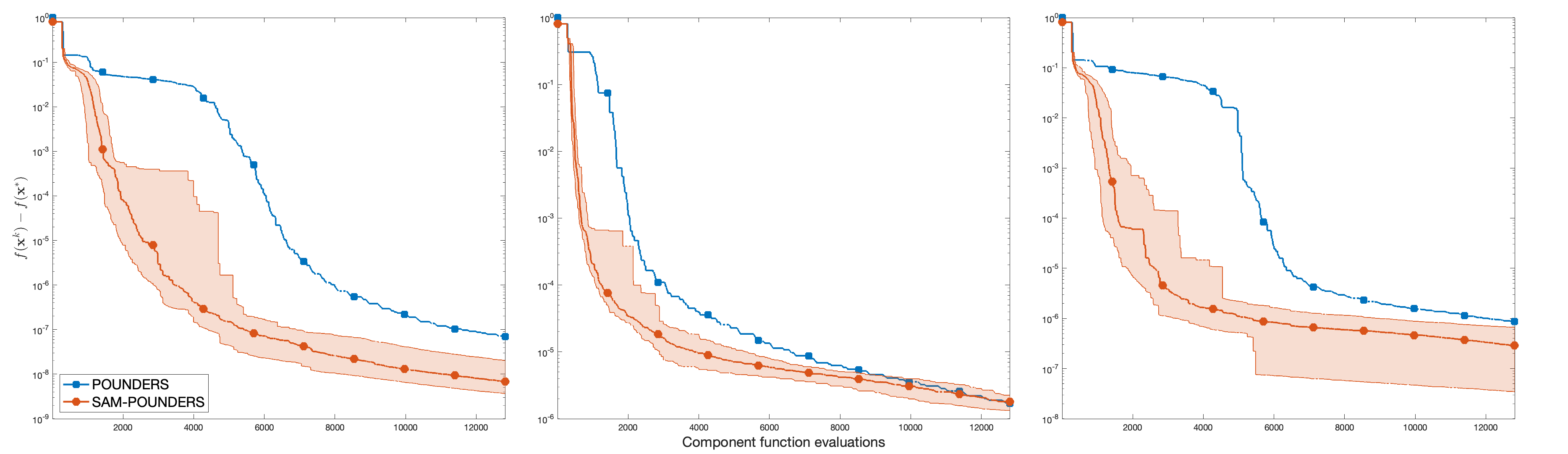}
    \caption{\label{fig:pounders_cube_16} Same as \Cref{fig:pounders_rosenbrock_16}, but with cube problems.} 
\end{figure}

\subsection{Performance of SAM when not assuming knowledge of $L_i$}\label{sec:nolip}
Until now, we have given the SAM methods access to global Lipschitz constants $L_i$ in order to compute the various bounds employed in the dynamic batch selection variants.
However, assuming access to $L_i$ is typically an impractical assumption: while it is practical in the logistic loss problems, it is virtually never practical in any setting of derivative-free optimization. 

Thus, we experiment with a simple modification to any given SAM method. Rather than assuming a value of $L_i$ at the beginning of the algorithm, we employ estimates $\tilde{L}_i$. We arbitrarily initialize $\tilde{L}_i=1$ for $i=1,\dots,p$.
We select $I_0 = \{1,\dots,p\}$ on the first iteration and additionally select $I_{k_0+1} = \{1,\dots,p\}$, where $k_0$ denotes the first successful iteration of \Cref{alg:dfotr}. 
In other words, we are guaranteed at the start of the algorithm to have computed a model of each component function centered at two distinct points; that is, we will have computed 
$m_i(\xb;\xb^0)$ and $m_i(\xb;\xb^{k_0 - 1})$ for $i=1,\dots,p$. 
Immediately after updating the models indexed by $I_{k_0+1}$, we compute a lower bound on a global Lipschitz constant via the secant
$$\tilde{L}_i \gets \displaystyle\frac{\|\nabla m_i(\xb;\xb^{k_0+1}) - \nabla m_i(\xb;\xb^0)\|}{\|\xb^{k_0+1}-\xb^0\|}.$$
Over the remainder of the algorithm, on each iteration $k$ in which $i\in I_k$, 
and immediately after updating the $i$th model, we update
$$\tilde{L}_i \gets \max\left\{\tilde{L}_i, \displaystyle\frac{\|\nabla m_i(\xb;\xb^{k}) - \nabla m_i(\xb;\cb^k_i)\|}{\|\xb^{k}-\cb^k_i\|}\right\},$$
provided that $\xb^k \neq \cb^k_i$. 

\begin{figure}[h!]
 \centering
 \includegraphics[width=.99\textwidth]{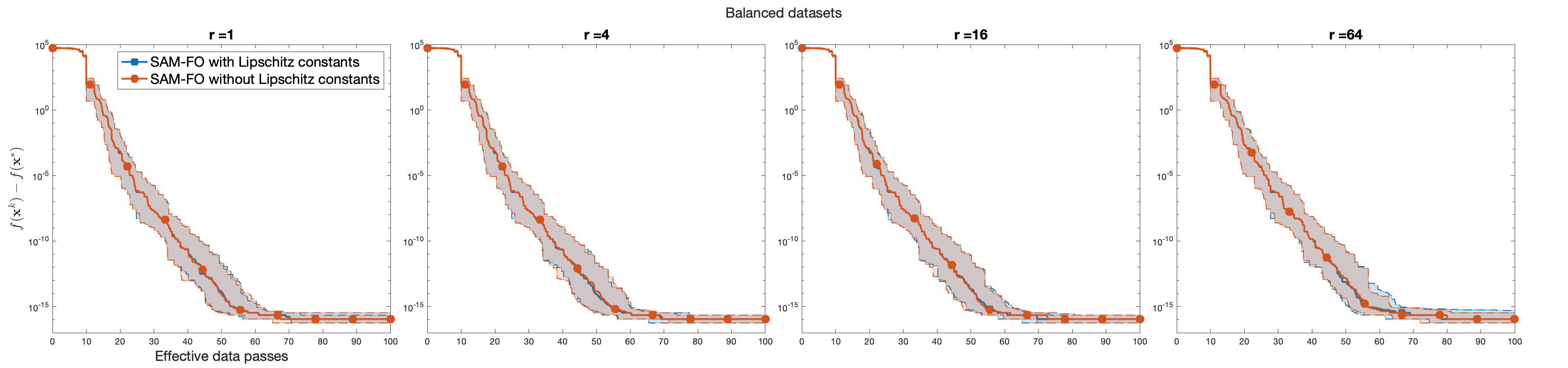}
 
 \includegraphics[width=.99\textwidth]{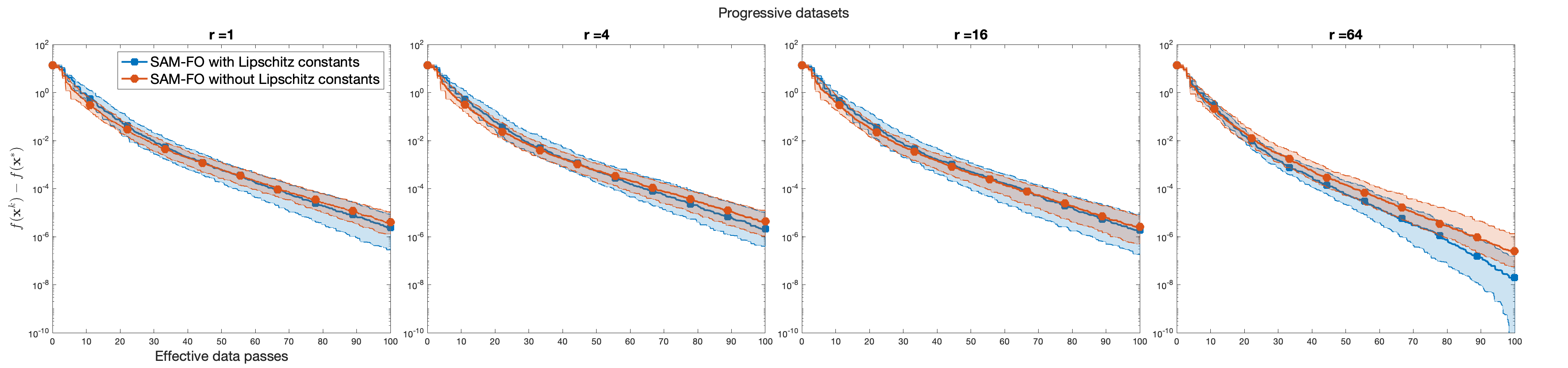}
 
 \includegraphics[width=.99\textwidth]{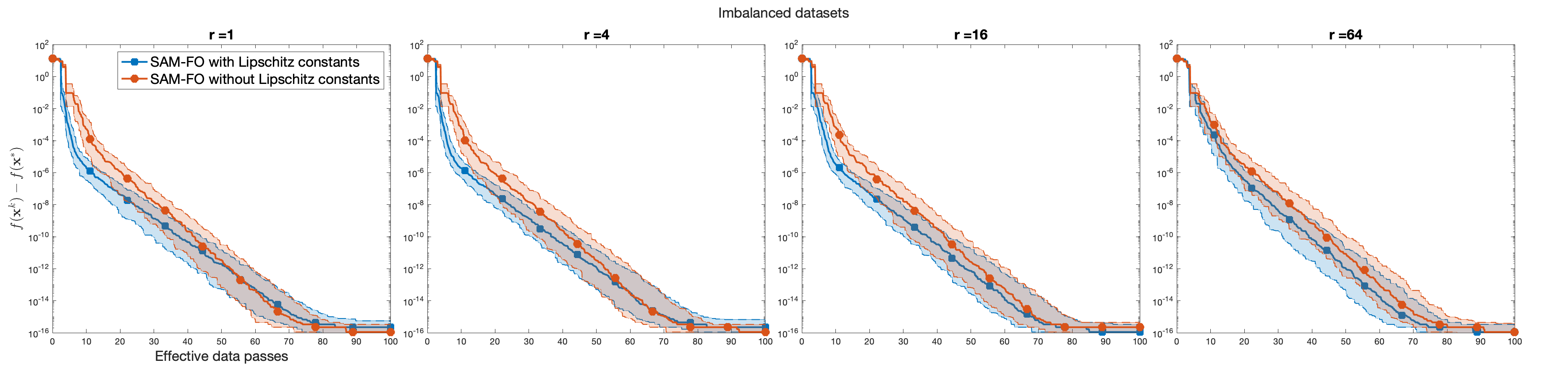}
 \caption{\label{fig:lip_nolip_logistic} Comparing the performance of \texttt{SAM-FO} with dynamic batch selection and resource size $r$ with itself when provided with Lipschitz constants and when using our proposed scheme to dynamically adjust the Lipschitz constant estimates. We show results using the same percentile bands as in \Cref{fig:sag_experiments} and again separate results by the mode of generating the dataset (balanced, progressive, or imbalanced Lipschitz constants).}
\end{figure}

In \Cref{fig:lip_nolip_logistic} we first illustrate results for the logistic loss problems. Remarkably, there is  little difference in performance in the balanced dataset case.
When we move to progressive datasets, it is remarkable that in median performance, and with the possible exception of $r=64$, the performance of \texttt{SAM-FO}-$r$ is typically better without assuming Lipschitz constants than with assuming them explicitly. This is not completely bizarre, however, as the global Lipschitz constants $L_i$ provided to any method are necessarily worst-case upper bounds on local Lipschitz constants, and it is generally unlikely that a method will encounter the upper bound---hence, the component functions with larger $L_i$ values (i.e., those with indices closer to $p$) may be updated more frequently than they need to be in the randomized methods, leading to slowdowns when $L_i$ is available. 
In the imbalanced datasets test, we see that \texttt{SAM-FO}-$r$ indeed loses some performance when using estimates $\tilde{L}_i$, but not drastically. 
A reasonable explanation for this phenomenon is that the overapproximation provided by $L_i$ that proved to be potentially problematic in the progressive dataset is in fact helpful in the imbalanced datasets---indeed we want to be updating the $p$th model with much higher frequency relative to the other $p-1$ components over the run of the algorithm, and so a relatively large value of $L_i$ provided by a global Lipschitz constant will certainly force this to happen. 

For the sake of space in the main body of text, we move the results of these experiments for the remaining two classes of problems with \texttt{SAM-POUNDERS} (generalized Rosenbrock and cube) to \Cref{sec:add_num}.
It suffices to say that in all cases, there is some loss in performance by replacing $L_i$ with the coarse estimate $\tilde{L}_i$, but the loss in most cases is acceptable. 

\section{Conclusion}
In this work, we have proposed a stochastic average model methodology for subsampling component functions in finite-sum optimization.
We specialize this methodology to extend the model-based derivative-free optimization solver \texttt{POUNDERS} to \texttt{SAM-POUNDERS}.
Our preliminary numerical results were designed to synthesize settings where the gradient Lipschitz constants of individual component functions were characteristically different.
Overall, we found that a version of \texttt{SAM-POUNDERS} that dynamically selects batches (and a batch size) according to coarse upper bounds on estimated changes in the function performs exceedingly well, and most importantly, usually outperforms its deterministic counterpart, \texttt{SAM-POUNDERS}.
We anticipate that \texttt{SAM-POUNDERS} will have immediate practical use cases in nuclear model calibration, but more generally, any setting in derivative-free optimization where the objective is expressed as a finite sum, and component function evaluations can be readily parallelized.

\section*{Acknowledgments}
The authors are grateful to Yong Xie for early discussions that inspired this work
. We are also grateful to two anonymous referees who improved the presentation of the paper, and especially grateful to one referee who provided an interesting discussion of the connections between \cref{eq:prob_fl} and probabilistic full-linearity.

\section*{Declarations}
\textbf{Funding} This work was supported in part by the U.S.~Department of Energy, Office of Science, Office of Advanced Scientific Computing Research Applied Mathematics and SciDAC programs under Contract Nos.~DE-AC02-06CH11357 and DE-AC02-05CH11231.

\textbf{Conflicts of interest} The authors declare no competing interests beyond the stated funding. 

\textbf{Data availability} The code to perform the experiments in this paper is available in a public repository, \url{https://github.com/mmenickelly/sampounders/}.

\begin{appendix}
\section{Statement of \Cref{alg:deville}}\label{sec:deville} 
The algorithm of \cite{chen2000general} essentially amounts to finding the solution $\tilde{\bpi}\in\Reals^p$ to the equation
\begin{equation}\label{eq:recursive_eqn}
\Psi_b(\tilde{\bpi}) = \bpi,
\end{equation}
where in our notation, $b$ is the desired batch size, and $\bpi$ are the desired inclusion probabilities. 
$\Psi_b$ is defined recursively and entrywise via 
\begin{equation}\label{eq:recursive}
\Psi_{0}(\tilde{\bpi}) = \zerob_{p}, \quad 
[\Psi_k(\tilde{\bpi})]_i = k\displaystyle\frac{\displaystyle\frac{\tilde{\pi}_i}{1-\tilde{\pi}_i}\left(1-[\Psi_{k-1}(\tilde{\bpi})]_i\right)}
{\displaystyle\sum_{j=1}^p \frac{\tilde{\pi}_j}{1-\tilde{\pi}_j}\left(1-[\Psi_{k-1}(\tilde{\bpi})]_j\right)}, \quad k = 1,2,\dots,b.
\end{equation}

\begin{algorithm}[!h]
\caption{Transforming $\bpi$ into $\tilde{\bpi}$} 
\label{alg:deville}
\textbf{Input: } Batch size $b>0$, probability vector $\bpi\in\Reals^p$.\\
Initialize $\tilde{\bpi}\gets \zerob_p$.\\
$A\gets\{i: \pi_i=1\}, B\gets \{1,2,\dots,p\}\setminus A, \tilde{\bpi}_A \gets \bpi_A$.\\
\If{$|A| = b$}{\textbf{return} $\tilde{\bpi}$}
Compute $\tilde{\pi}_B^*$ such that $\Psi_{b-|A|}(\tilde{\bpi}_B^*) = \bpi_B.$\\
$\blambda\gets \log\left(\displaystyle\frac{\tilde{\bpi}_B^*}{\oneb_{|B|}-\tilde{\bpi}_B^*} \right)$ ($\log$ and division interpreted entrywise)\\
Compute scalar $c$ such that
\begin{equation}
    \label{eq:scalar_c} 
    b - |A| = \displaystyle\sum_{i\in B} \frac{\exp\left(\lambda_i + c\right)}{1+ \exp\left(\lambda_i + c\right)}.
\end{equation}

$\tilde{\bpi}_B^* \gets \displaystyle\frac{\exp(\blambda + c\oneb_{|B|}) }{\oneb_{|B|} + \exp(\blambda + c\oneb_{|B|})}$ ($\exp$ and division interpreted entrywise) \\
$\tilde{\bpi}_B \gets \tilde{\bpi}_B^*$.\\
\textbf{return } $\tilde\bpi$\\
\end{algorithm}
\Cref{alg:deville} simply solves \cref{eq:recursive_eqn} after preprocessing any $\tilde{\pi}_i=1$, a clearly necessary step in light of \cref{eq:recursive}. 
In our experiments, we used a basic implementation of Newton's method with a backtracking line search for the solution of \cref{eq:recursive_eqn}.
As mentioned in the main body of the text, we then do some postprocessing to force
$$\displaystyle\sum_{i=1}^p \tilde{\pi}_i^k = b.$$
Although technical, the reasoning for why this postprocessing is valid is due to the fact that Poisson sampling is a special case of exponential sampling with a nontrivial affine subspace invariant under the exponential probability denisty function; this is is well-explained in, for instance, \cite{tille2006sampling}[Section 5.6.3]. 
In our experiments, we also solve \eqref{eq:scalar_c} by a basic implementation of Newton's method with a backtracking line search.

 %
    

\section{Proof of \Cref{thm:dfo_bound}}\label{sec:thm1}
\begin{proof}
 We will first derive a bound on $\|\nabla F_i(\xb) - \gb_i^k(\cb^k_i;\delta_i)\|$.
 By \Cref{ass:Y}, $\hat{V}_{Y_i}(\xb;\cb^k_i)$ is invertible, regardless of $\xb$ and $\delta_i>0$. 
 Setting up the linear interpolation system, we see that
 $$(\vb^j-\cb_i^k)^\top \gb_i^k(\cb^k_i;\delta_i) = F_i(\vb^j) - F_i(\cb^k_i), \quad j=1,\dots,n.$$
 By the mean value theorem, this right-hand side is also equal to
 $$F_i(\vb^j) - F_i(\cb^k_i) = \displaystyle\int_0^1 (\vb^j - \cb^k_i)^\top \nabla F_i(\cb^k_i + t(\vb^j-\cb^k_i))\mathrm{d}t, \quad j=1,\dots,n,$$
 and so, by \Cref{ass:f},
 $$(\vb^j-\cb^k_k)^\top(\nabla F_i(\cb^k_i)-\gb_i^k(\cb^k_i;\delta_i)) \leq \frac{L_i}{2}\|\vb^j - \cb^k_i\|^2, \quad j=1,\dots,n.$$
 Combining these $n$ inequalities, noting that $\|\vb^j - \cb^k_i\|^2\leq \delta_i$ for $j=1,\dots,n$, and recalling the definition of $\hat{V}_{Y_i}(\xb;\cb^k_i)$ in \cref{eq:vhatbound},
 $$\|\hat{V}_{Y_i}(\xb;\cb^k_i)(\nabla F_i(\cb^k_i) - \gb_i^k(\cb^k_i;\delta_i))\| \leq \frac{L_i\sqrt{n}}{2\max\{\delta_i,\|\xb-\cb^k_i\|\}}\delta_i^2,$$
 and so
 \begin{equation}\label{eq:grad_at_center_bound}
 \|\nabla F_i(\cb^k_i) - \gb_i^k(\cb^k_i;\delta_i)\| \leq \frac{L_i\sqrt{n}\|\hat{V}_{Y_i}^{-1}(\xb;\cb^k_i)\|}{2\max\{\delta_i,\|\xb-\cb^k_i\|\}}\delta_i^2.
 \end{equation}
 Thus, for any $\xb \in \Reals^n$,
 \begin{equation}\label{eq:grad_bound}
 \begin{array}{lll}
  \|\nabla F_i(\xb) - \gb_i^k(\cb^k_i;\delta_i)\| & \leq &\|\nabla F_i(\xb) - \nabla F_i(\cb^k_i)\| + \|\nabla F_i(\cb^k_i) - \gb_i^k(\cb^k_i;\delta_i)\|\\
  & \leq & 
 L_i\left(\|\xb-\cb^k_i\|+\frac{\sqrt{n}\|\hat{V}_{Y_i}^{-1}(\xb;\cb^k_i)\|}{2\max\{\delta_i,\|\xb-\cb^k_i\|\}}\delta_i^2 \right).
 \end{array}
 \end{equation}
 Now, by Taylor's theorem,
 $$F_i(\xb) - F_i(\cb^k_i) \leq \nabla F_i(\cb^k_i)^\top (\xb-\cb^k_i) + \frac{L_i}{2}\|\xb-\cb^k_i\|^2.$$
 Combined with \cref{eq:grad_bound},
 $$
 \begin{array}{lll}
 F_i(\xb) - m_i(\xb;\cb^k_i) & = &  F_i(\xb) - F_i(\cb^k_i) - \gb_i^k(\cb^k_i;\delta_i)^\top(\xb-\cb^k_i)\\
 & \leq & 
 (\nabla F_i(\cb^k_i)-\gb(\cb^k_i;\delta_i))^\top(\xb-\cb^k_i) + \frac{L_i}{2}\|\xb-\cb^k_i\|^2.\\
 & \leq &
 L_i\left(\|\xb-\cb^k_i\|+\frac{\sqrt{n}\|\hat{V}_{Y_i}^{-1}(\xb;\cb^k_i)\|}{2\max\{\delta_i,\|\xb-\cb^k_i\|\}}\delta_i^2 \right)\|\xb-\cb^k_i\| + \frac{L_i}{2}\|\xb-\cb^k_i\|^2.
 \end{array}
 $$
 The theorem follows.
\end{proof}

\section{Additional Numerical Results}\label{sec:add_num}
\begin{figure}[h!]
 \centering
 \includegraphics[width=.99\textwidth]{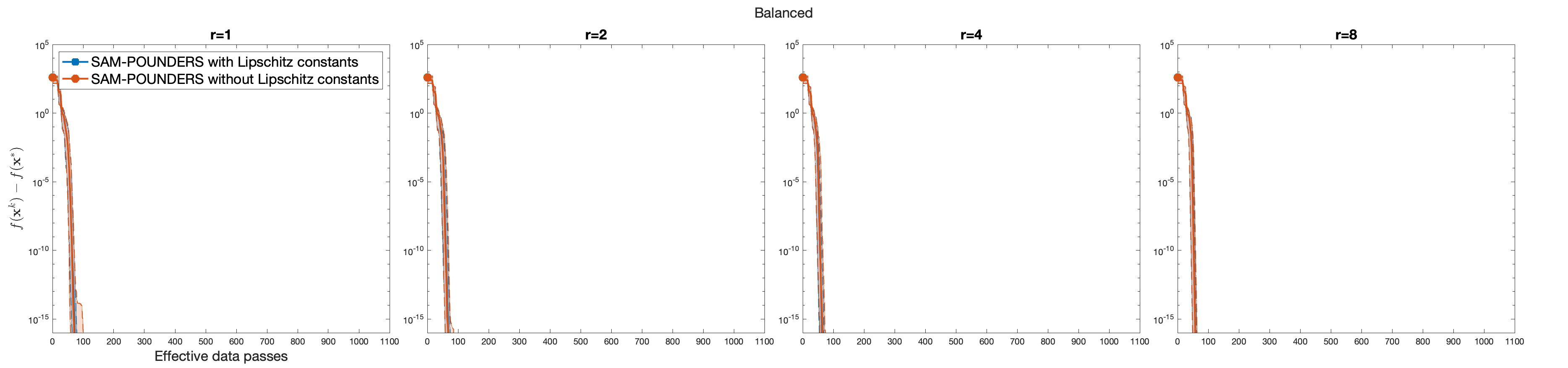}
 
 \includegraphics[width=.99\textwidth]{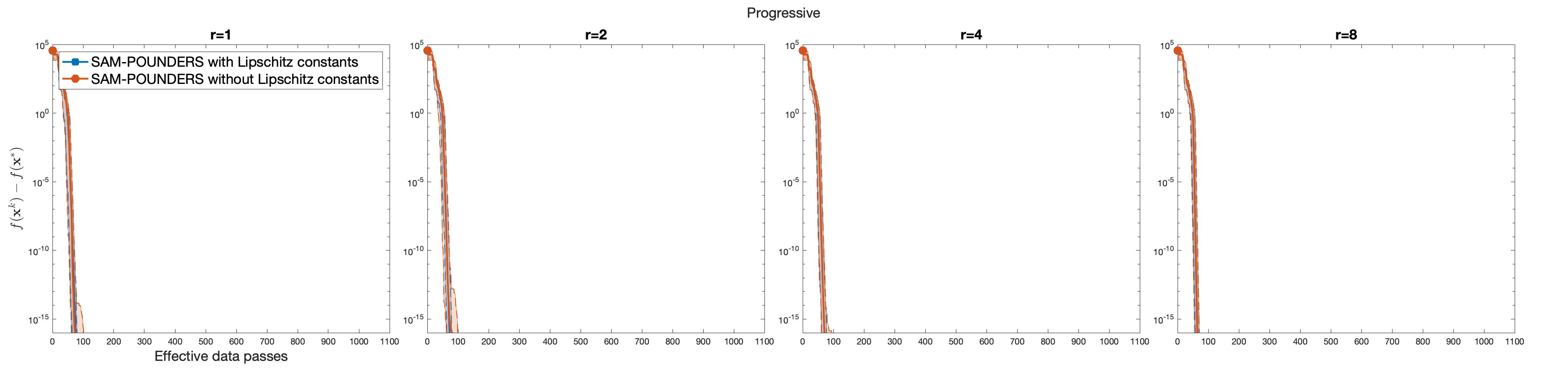}
 
 \includegraphics[width=.99\textwidth]{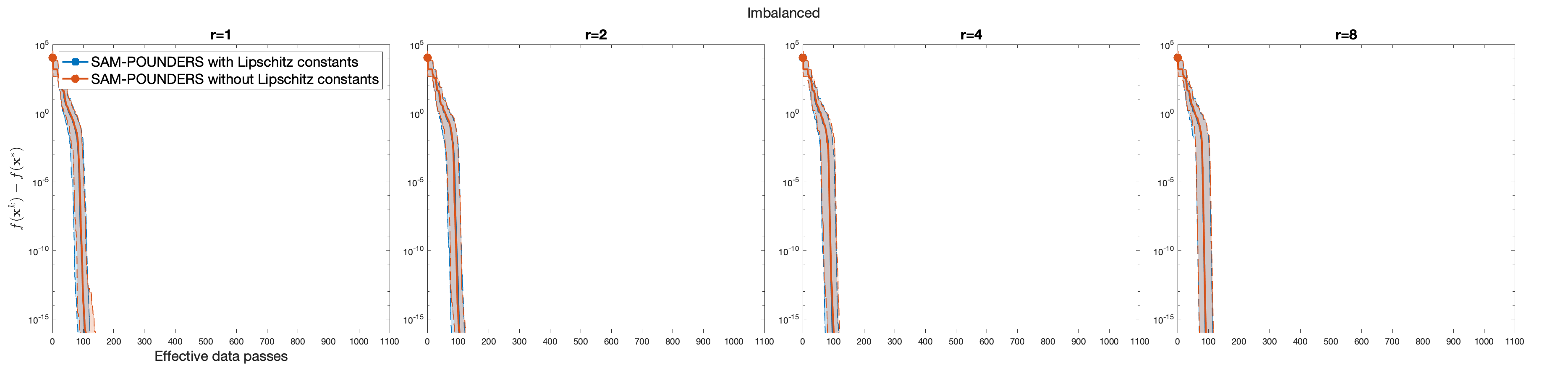}
 \caption{\label{fig:lip_nolip_rosenbrock} Comparing the performance of \texttt{SAM-POUNDERS} with dynamic batch selection and resource size $r$ with itself when provided with Lipschitz constants and when using our proposed scheme to dynamically adjust the Lipschitz constant estimates. These results show performance on the generalized Rosenbrock problems.}
\end{figure}

\begin{figure}[h!]
 \centering
 \includegraphics[width=.99\textwidth]{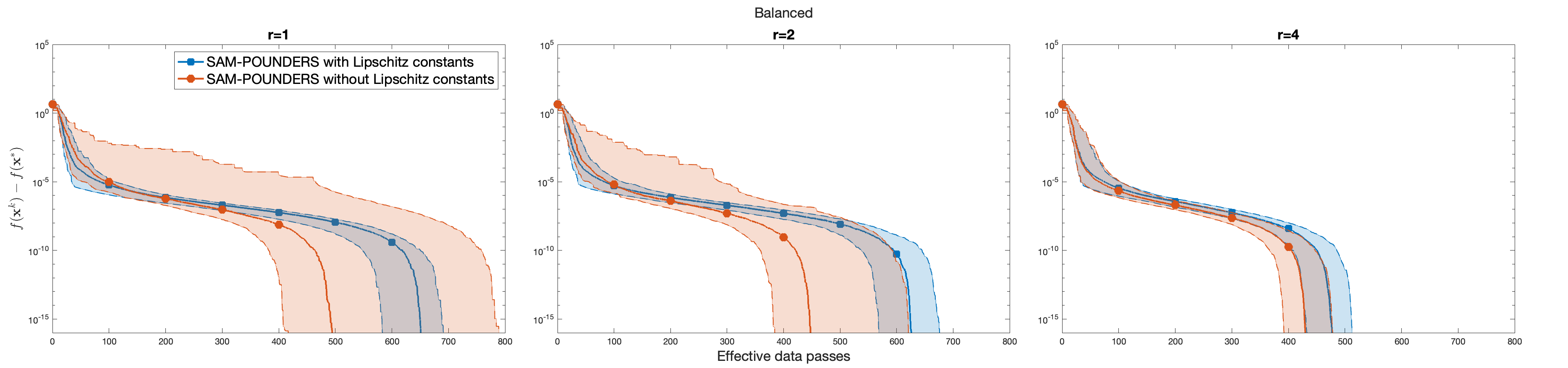}
 
 \includegraphics[width=.99\textwidth]{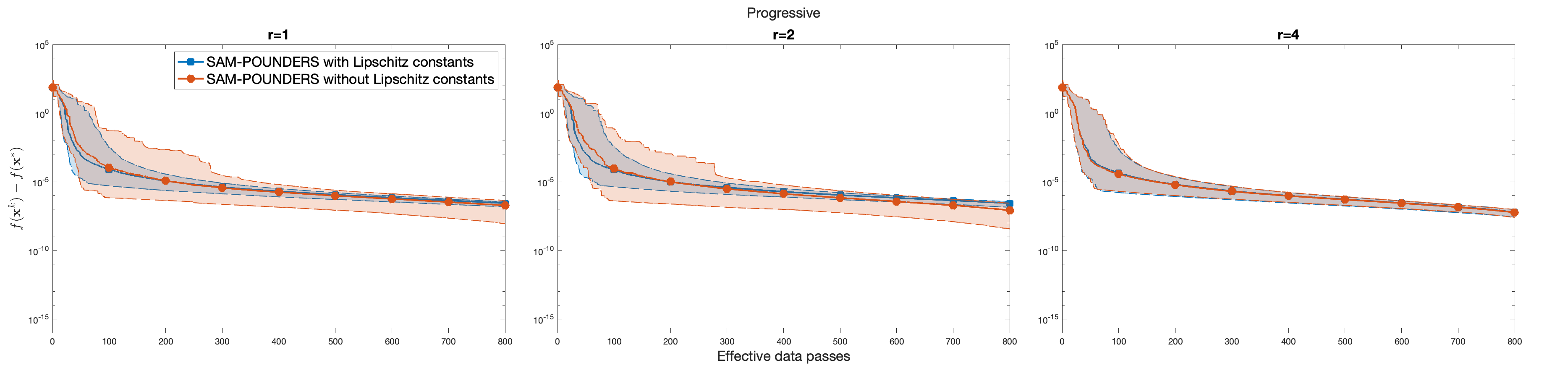}
 
 \includegraphics[width=.99\textwidth]{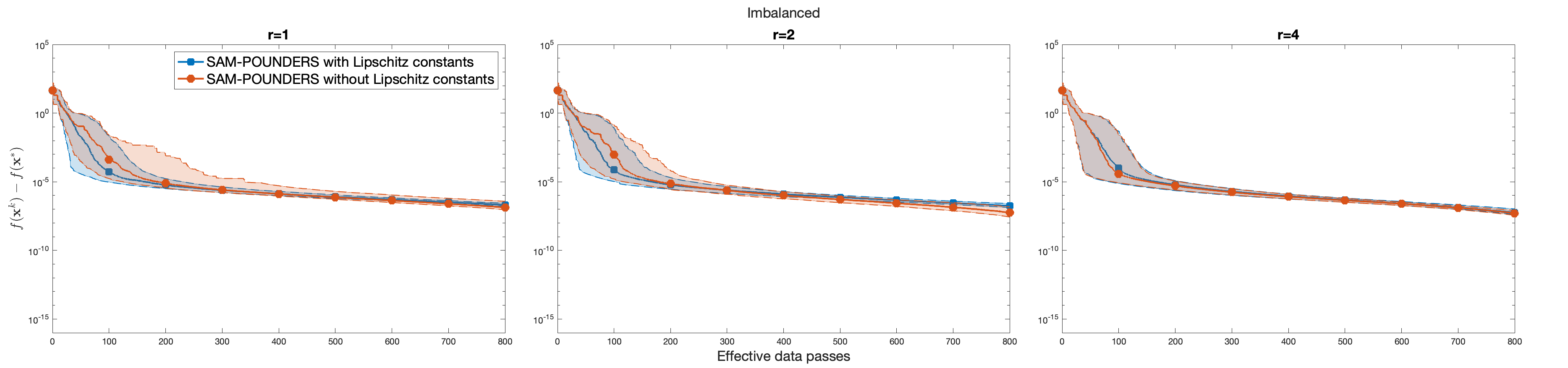}
 \caption{\label{fig:lip_nolip_cube} Comparing the performance of \texttt{SAM-POUNDERS} with dynamic batch selection and resource size $r$ with itself when provided with Lipschitz constants and when using our proposed scheme to dynamically adjust the Lipschitz constant estimates. These results show performance on the cube problems.}
\end{figure}

In both \Cref{fig:lip_nolip_rosenbrock} and \Cref{fig:lip_nolip_cube}, we intentionally leave the $x$-axis identical to the one employed in \Cref{fig:rosenbrock_compare} and \Cref{fig:cube_compare}, for easy visual comparison.
We see that while in all cases, some performance is lost when access to $L_i$ is taken away, the median performance of a dynamic variant without global Lipschitz constants is still always better than the median performance of the same method with uniform sampling. 

Our final results repeat the experiments in \Cref{sec:pounders_compare}, but with the larger dimensions $n=p=64$. 
We comment that many may see this as an atypical use of \texttt{POUNDERS}; many practical use cases of derivative-free optimization do not exceed more than a dozen variables (for instance, the 53 problems in the Mor{\'e}-Wild benchmarking set \cite{JJMSMW09} contains no problems larger than $n=12$). 
We also comment that in recent work by one author \cite{menickelly2023avoiding}, we have extended the ideas in this paper to randomly sampling in subspaces for higher dimensional derivative-free optimization; but even in that work,  we do not consider problems larger than $n=125$. 
Results are shown in \Cref{fig:pounders_rosenbrock_64} and \Cref{fig:pounders_cube_64}, respectively. 

\begin{figure}
    \caption{\label{fig:pounders_rosenbrock_64} Comparison of \texttt{POUNDERS} and \texttt{SAM-POUNDERS} on $n=p=64$-dimensional Rosenbrock problems with balanced (left), progressive (center), and imbalanced (right) modes of generation. Both solvers were given a common starting point, and \texttt{SAM-POUNDERS} was run with 30 different random seeds. Median performance is illustrated in the solid line of \texttt{SAM-POUNDERS}, with $10-90\%$ percentile bands illustrated in the transparent filled region.}
    \includegraphics[width=.99\textwidth]{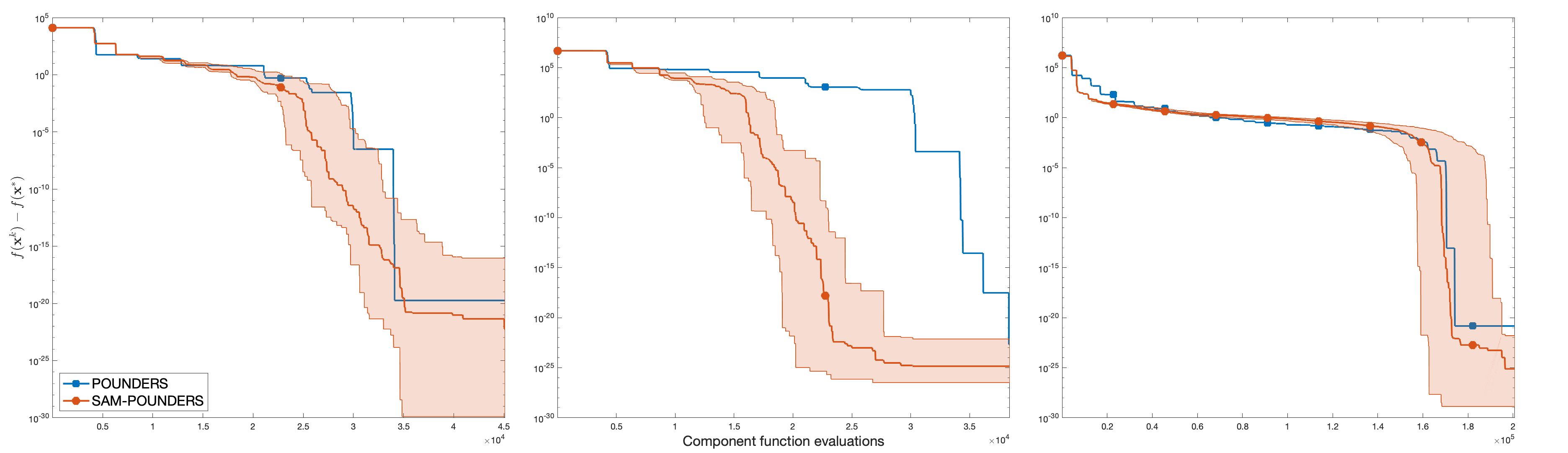}
\end{figure}

\begin{figure}
    \caption{\label{fig:pounders_cube_64} Same as \Cref{fig:pounders_rosenbrock_64}, but with cube problems}. 
    \includegraphics[width=.99\textwidth]{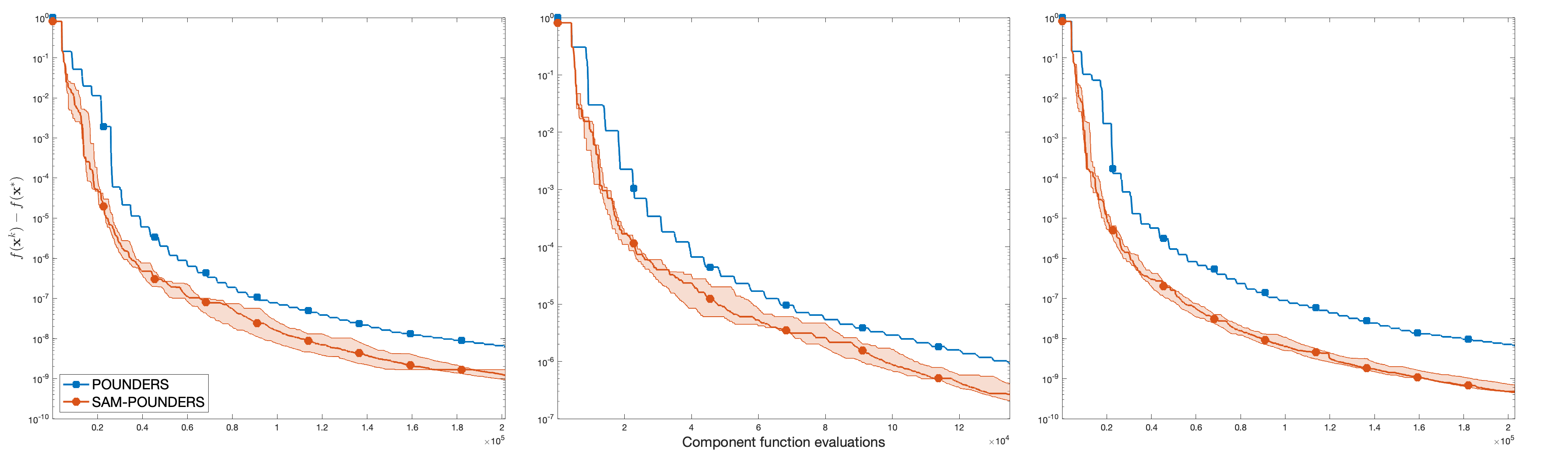}
\end{figure}

\end{appendix} 

\bibliography{smw-bigrefs.bib}
\bibliographystyle{abbrvnat}

\bigskip

\framebox{\parbox{.90\linewidth}{\scriptsize The submitted manuscript has been created by
        UChicago Argonne, LLC, Operator of Argonne National Laboratory (``Argonne'').
        Argonne, a U.S.\ Department of Energy Office of Science laboratory, is operated
        under Contract No.\ DE-AC02-06CH11357.  The U.S.\ Government retains for itself,
        and others acting on its behalf, a paid-up nonexclusive, irrevocable worldwide
        license in said article to reproduce, prepare derivative works, distribute
        copies to the public, and perform publicly and display publicly, by or on
        behalf of the Government.  The Department of Energy will provide public access
        to these results of federally sponsored research in accordance with the DOE
        Public Access Plan \url{http://energy.gov/downloads/doe-public-access-plan}.}}

\end{document}